\documentclass{amsart}
\usepackage{bbm}
\usepackage{mathabx}
\usepackage{tensor}
\usepackage{amssymb}
\usepackage{amsmath}
\usepackage{amsfonts}
\usepackage{amsthm}
\usepackage{stmaryrd}
\usepackage[all]{xy}
\usepackage{mathrsfs}
\usepackage{graphicx}
\usepackage{hyperref}
\usepackage{color}
\usepackage{tikz-cd}
\usepackage{tkz-euclide}
\usepackage{tikz}
\usetikzlibrary{matrix}
\usepackage{enumitem}
\usepackage{amscd}
\usepackage{pifont}
\usetikzlibrary{arrows}
\usepackage{placeins}
\usepackage{xspace}
\usetikzlibrary{calc,intersections,through,backgrounds}

\setlist[enumerate]{label=(\arabic*),nosep}

\newtheorem{thm}{Theorem}[section]
\newtheorem{theorem}[thm]{Theorem}

\newtheorem{corollary}[thm]{Corollary}
\newtheorem{lemma}[thm]{Lemma}
\newtheorem{proposition}[thm]{Proposition}
\theoremstyle{definition}

\newtheorem{definition}[thm]{Definition}
\newtheorem{example}[thm]{Example}

\newtheorem{notation}[thm]{Notation}

\newtheorem{remark}[thm]{Remark}

\numberwithin{equation}{section}

\def\FF{\mathbb{F}}
\def\QQ{\mathbb{Q}}
\def\ZZ{\mathbb{Z}}
\def\RR{\mathbb{R}}
\def\NP{\mathrm{NP}}
\def\GNP{\mathrm{GNP}}
\def\HP{\mathrm{HP}}

\DeclareMathOperator{\Cone}{Cone}
\DeclareMathOperator{\conv}{conv}
\DeclareMathOperator{\Vol}{Vol}
\newcommand{\Tr}{\mathrm{Tr}}
\DeclareMathOperator{\Vertices}{Vert}
\def\sgn{\mathrm{sgn}}
\newcommand{\xx}{\underline x}
\def\aa{\underline a}

\def\MM{\mathbb{M}}
\def\TT{\mathbb{T}}
\def\SS{\mathbb S}
\begin{document}\large
	\title{Asymptotic low slopes of toric exponential sums}
	\author{Rufei Ren}
	\address{School of Mathematical Sciences, Fudan University, China}
	\date{\today}

\begin{abstract}
Let $\Delta$ be an integral Newton polytope containing the origin.
We prove that the generic Newton slopes corresponding to Hodge slopes
strictly smaller than one differ from their Hodge values by
$O_\Delta(p^{-1})$. No hypothesis is imposed on the semigroup generated
by the lattice points of $\Delta$. The proof combines an auxiliary
prime and the Cartier operator to show that a coefficient of each
required Dwork principal minor is nonzero. When the origin is interior
to $\Delta$, duality gives the same estimate for the corresponding
high slopes and, in dimension two, for all slopes.
\end{abstract}
\subjclass[2020]{11T23, 11L07, 14G10}
\keywords{Exponential sums, Newton polygons, Hodge polygons and lattice polytopes}
\maketitle
\setcounter{tocdepth}{1}
\tableofcontents

\section{Introduction}

Let $\Delta\subset\RR^n$ be a full-dimensional integral convex polytope
containing the origin. For a Laurent polynomial
$f=\sum_P a_P\xx^P$, we use the Newton polytope at infinity
\[
\Delta(f):=\conv\bigl(\{0\}\cup\{P:a_P\ne0\}\bigr).
\]
For $f$ over $\FF_q$, where $q:=p^a$, fix a nontrivial additive character
$\chi\colon\FF_p\longrightarrow\overline{\QQ}_p^{\times}$ and put
\begin{gather*}
 S_k^*(f,\chi):=\sum_{\xx\in(\FF_{q^k}^{\times})^n}
 \chi\bigl(\Tr_{\FF_{q^k}/\FF_p}(f(\xx))\bigr)\\
\text{and}\quad
L_f^*(\chi,s):=\exp\left(\sum_{k\ge1}S_k^*(f,\chi)\frac{s^k}{k}\right).
\end{gather*}
For $f$ nondegenerate with Newton polytope $\Delta$, the function
\[
 L_f^*(\chi,s)^{(-1)^{n-1}}
\]
is a polynomial of degree $V_\Delta:=n!\Vol(\Delta)$.
Since $\chi$ is fixed, we omit it from the notation for these functions
and for the characteristic series below. For a power series $G(s)$
with constant term $1$, we denote its Newton polygon by $\NP_{v_q}(G)$,
where the valuation $v_q$ is normalized by $v_q(q)=1$.

Put
\[
 \Cone(\Delta):=\RR_{\ge0}\Delta
\quad\text{and}\quad
\MM(\Delta):=\Cone(\Delta)\cap\ZZ^n,
\]
and, for $P\in\Cone(\Delta)$, define
\[
 w(P):=\inf\{t\ge0:P\in t\Delta\}.
\]
Let $D$ be the least positive integer such that
$w(\MM(\Delta))\subset D^{-1}\ZZ$. For $k\in\ZZ$, put
\[
 W_\Delta(k):=\#\{P\in\MM(\Delta):w(P)=k/D\},
\]
and define
\[
 h_\Delta(k):=\sum_{j=0}^n(-1)^j\binom njW_\Delta(k-jD).
\]
The numbers $h_\Delta(k)$ are nonnegative integers, vanish outside
$0\le k\le nD$ and have sum $V_\Delta$ \cite{AS89}.
The Hodge polygon $\HP_L(\Delta)$ is the polygon starting at $(0,0)$
with slope $k/D$ of multiplicity $h_\Delta(k)$, in increasing order
of $k$. For nondegenerate $f$ with Newton polytope $\Delta$, the
polygon $\NP_{v_q}\bigl(L_f^*(s)^{(-1)^{n-1}}\bigr)$ lies above $\HP_L(\Delta)$ and has the same
endpoints \cite{AS89,DL91}.

For every sufficiently large prime $p$, the space of nondegenerate
Laurent polynomials with Newton polytope $\Delta$ is nonempty
\cite{AS89}. Whenever this space is nonempty,
we denote by $\GNP_{L,p}(\Delta)$ the generic Newton polygon of $L_f^*(s)^{(-1)^{n-1}}$
as $f$ runs over all nondegenerate Laurent polynomials over
$\overline{\FF}_p$ with Newton polytope $\Delta$.
Let
\[
 \beta_1\le\cdots\le\beta_{V_\Delta}
\quad\text{and}\quad
\alpha_{p,1}\le\cdots\le\alpha_{p,V_\Delta}
\]
be the slopes of $\HP_L(\Delta)$ and $\GNP_{L,p}(\Delta)$, respectively.

Put
\[
m_\Delta:=\#\{P\in\MM(\Delta):w(P)<1\}.
\]
The first $m_\Delta$ slopes of $\HP_L(\Delta)$ are the lattice
weights strictly smaller than one, counted with multiplicity.
Our main result concerns these slopes. In all limits below,
$p$ runs through the prime numbers.

\begin{theorem}\label{thm:main-L}
There are constants $C_\Delta\ge0$ and $p_0(\Delta)$ such that, for
every prime $p>p_0(\Delta)$, we have
\begin{equation}\label{eq:quantitative-L-height}
0\le\GNP_{L,p}(\Delta)(x)-\HP_L(\Delta)(x)
\le\frac{C_\Delta}{p-1}
\qquad(0\le x\le m_\Delta).
\end{equation}
In particular,
\begin{equation}\label{eq:quantitative-L-slope}
|\alpha_{p,i}-\beta_i|\le\frac{C_\Delta}{p-1}
\qquad(1\le i\le m_\Delta).
\end{equation}
For each such $p$, there is a nondegenerate Laurent polynomial $f_p$
over $\FF_{p^2}$ with Newton polytope $\Delta$ for which
\eqref{eq:quantitative-L-height} also holds with $\GNP_{L,p}(\Delta)$
replaced by $\NP_{v_{p^2}}\bigl(L_{f_p}^*(s)^{(-1)^{n-1}}\bigr)$.
\end{theorem}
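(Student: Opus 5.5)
We follow the Adolphson--Sperber/Wan approach through Dwork theory, but never require the Hodge polygon itself to be attained. Instead we target a perturbation of size $O(p^{-1})$. First we pass from $L_f^*$ to the characteristic series $C_f^*(s)$ of the Dwork operator $\psi_p\circ E_f$ acting on the Banach space with orthonormal-type basis $\{\pi^{w(P)}\xx^P: P\in\MM(\Delta)\}$. The standard relation $L_f^*(s)^{(-1)^{n-1}}=\prod_{j=0}^{n}C_f^*(q^js)^{(-1)^j\binom nj}$, together with the fact that $\HP_L(\Delta)$ agrees with the Hodge polygon of $C_f^*$ on the first $m_\Delta$ slopes (all of which are $<1$), reduces both \eqref{eq:quantitative-L-height} and \eqref{eq:quantitative-L-slope} to the analogous statement for $C_f^*$ on $0\le x\le m_\Delta$. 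Here one uses that the factors $C_f^*(q^js)$ with $j\ge1$ only contribute slopes $\ge1$. The lower bound $\GNP\ge\HP$ is already known from \cite{AS89,DL91}, so only the upper bound is at issue.

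For the upper bound it suffices, for each vertex of $\HP$ on $[0,m_\Delta]$, that is, for each $k$ such that the $k$ lattice points of smallest weight form a union $S_k$ of full weight levels, to exhibit a nonzero coefficient. Concretely, we want the $k$-th coefficient of $C_f^*$ (for $a=1$ first) to have valuation at most $\sum_{P\in S_k}w(P)+C/(p-1)$. By Grothendieck specialization this holds generically as soon as a single $f$ achieves it. The coefficient is, up to a unit, a sum of principal $k\times k$ minors of the Dwork matrix $(F_{pQ-P})$. Modulo higher powers of $\pi$, its leading term is the principal minor indexed by $S_k$. This reduces to a determinant $\det\bigl(\text{coefficient of }\xx^{pQ-P}\text{ in }f^{m_{P,Q}}/m_{P,Q}!\bigr)$, whose exponents $m_{P,Q}$ are close to $w(pQ-P)$ but exceed the Hodge value by bounded corrections. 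These corrections come from the fact that $pQ-P$ may not be reachable with minimal weight when the semigroup generated by $\Delta\cap\ZZ^n$ is not saturated. Let $C_\Delta$ be the maximum over $P,Q$ of this defect, times $k$. The quantity is then a polynomial $H_k(\{a_P\})$ over $\FF_p$ in the coefficients of $f$.

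The main obstacle is proving that $H_k\not\equiv0$ for all large $p$. For this we write $p=Nd+r$ with $d$ divisible by $D$ and the residue $r$ bounded, and relate the minor to the Cartier operator. The entry in row $P$, column $Q$ is the coefficient of $\xx^{pQ-P}$ in a power of $f$, so the determinant is essentially the matrix of the Cartier operator on the span of $\{\xx^P:P\in S_k\}$ after multiplication by a fixed power of $f$. We would use an auxiliary prime $\ell$ (or auxiliary residue) to choose the multiplier so that each target $pQ-P$ has a unique lowest-degree expression, up to a correction confined to a bounded set. This forces one monomial in the $a_P$ to appear with a coefficient given by a product of multinomials that are nonzero mod $p$ once $p>p_0(\Delta)$. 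Distinct permutations contribute distinct monomials, which we check by a weight/lexicographic argument, so $H_k\ne0$.

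Finally, since $H_k$ and the nondegeneracy condition are both nonzero polynomials of degree bounded in terms of $\Delta$, they have a common nonvanishing point over $\FF_{p^2}$ for $p$ large. This gives $f_p$, and we rerun the same argument with $a=2$, where the determinant is multiplicative in Frobenius twists up to the same error.
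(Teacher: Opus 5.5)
\textbf{Verdict: there is a genuine gap.} Your outer reductions match the paper: passing to $C_f^*$, isolating the principal minor indexed by the lowest-weight set, and choosing $f_p$ over $\FF_{p^2}$ by a degree count. The gap is in the step that carries the whole proof, the nonvanishing of $H_k$ modulo $p$.

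Your mechanism is that a suitable monomial in the $a_R$ receives a contribution from only one permutation, so its coefficient is a single product of inverse factorials (``distinct permutations contribute distinct monomials''). This is false. The monomial records only the total exponent of each parameter, summed over the rows.

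For example, let $P,2P\in\SS$ lie on a common ray. The identity uses the row targets $(p-1)P$ and $(2p-2)P$. The transposition of $P$ and $2P$ uses $(p-2)P$ and $(2p-1)P$. Both pairs sum to $(3p-3)P$, so the same monomials are in general reached by both permutations, with opposite signs and different factorial weights. This happens even for monomials in $a_P$ alone.

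So every candidate coefficient is a signed sum of products of $1/e!$ with $e$ as large as roughly $p$. You need a reason why this sum is nonzero modulo $p$ for all large $p$. No weight or lexicographic ordering gives this, because no single permutation dominates; this cancellation is exactly the difficulty. You name the right tools, an auxiliary prime and the Cartier operator, but you use them only to look for unique factorizations, which cannot be achieved.

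What is missing is a way to make the signed sum independent of $p$ and then evaluate it. The paper does this in four steps.
\begin{enumerate}
\item It fixes the target monomial $\pi^{\nu_p}\prod_{R\in\SS}a_R^{\ell-1}\prod_v a_v^{(p-\ell)\Lambda_v}$, where $\ell$ is an auxiliary prime with $\ell\equiv p\pmod{D_\Sigma}$.
\item The convex piecewise-linear function $\psi$ of Lemma~\ref{lem:cone-weights} forces every contributing row factorization into $\sigma_P$. The vertex exponents are then determined by the other data.
\item The congruence $b_{p,v}(P)!/e_{P,v}!=G_{k_v}(b_{p,v}(P))\equiv G_{k_v}(-\theta_v(P))\pmod p$ shows that, up to a unit, the whole signed sum is congruent to a fixed rational number $c_r$. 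This number depends only on $p\bmod D_\Sigma$, which is the uniformity in $p$ your plan lacks.
\item It shows $c_r\ne0$ by working modulo $\ell$. A Euclidean triangle-inequality argument reduces to single rays. On each ray, the coefficient of $\prod_j a_j^{\ell-1}$ in $\det(h_{\ell i-j})$ is computed with the Cartier identity $\mathcal C_\ell\bigl(J(\underline F)\prod_iF_i^{\ell-1}\bigr)=J(\underline F)$, applied to the power sums, whose Jacobian is $m!$ times the Vandermonde determinant.
\end{enumerate}

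Three smaller issues:
\begin{itemize}
\item Your $H_k$ has degree about $(p-1)H_k$, not a degree bounded in terms of $\Delta$. That growth is why $\FF_{p^2}$ rather than $\FF_p$ is needed.
\item Your ``maximum defect over $P,Q$'' can be infinite, since $pQ-P$ need not lie in the semigroup. The paper instead measures the error by $\nu_{p,k}-(p-1)H_k=(\ell-1)\sum_{P\in\SS_k}(1-w(P))$.
\item Lower-degree formal coefficients that are nonzero but divisible by $p$ must be handled. The paper does this with Lemma~\ref{lem:formal-specialization}.
\end{itemize}
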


Consequently,
\begin{equation}\label{eq:main-L-height}
\lim_{p\to\infty}\GNP_{L,p}(\Delta)=\HP_L(\Delta)
\end{equation}
uniformly on $[0,m_\Delta]$, and
$\lim_{p\to\infty}\alpha_{p,i}=\beta_i$ for
$1\le i\le m_\Delta$.

When the origin is interior to $\Delta$, duality gives the following
consequence for the high slopes.

\begin{corollary}\label{cor:interior}
Suppose that the origin is interior to $\Delta$.
With the constants in Theorem~\ref{thm:main-L}, for every prime
$p>p_0(\Delta)$, \eqref{eq:quantitative-L-slope} holds for every $i$
with $\beta_i>n-1$.
If $n=2$, then \eqref{eq:quantitative-L-height} holds on
$[0,V_\Delta]$ and \eqref{eq:quantitative-L-slope} holds for every
$1\le i\le V_\Delta$. In particular, \eqref{eq:main-L-height} holds
uniformly on $[0,V_\Delta]$.
\end{corollary}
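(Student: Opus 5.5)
The plan is to use the functional equation for nondegenerate $f$ when $0$ lies in the interior of $\Delta$. By Adolphson--Sperber \cite{AS89} and Denef--Loeser \cite{DL91}, the polynomial $L_f^*(s)^{(-1)^{n-1}}$ of degree $V_\Delta$ has reciprocal roots $\omega_1,\dots,\omega_{V_\Delta}$. The multiset $\{q^n/\omega_i\}$ equals $\{\omega_i\}$ up to the character twist $\chi\mapsto\chi^{-1}$; this comes from replacing $f(\xx)$ by $f(\xx^{-1})$ composed with the Poincaré-duality pairing on the top cohomology. The twist does not change $p$-adic valuations, since $\chi^{-1}$ is Galois conjugate to $\chi$ over $\QQ_p$. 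Hence the Newton slopes are symmetric under $\alpha\mapsto n-\alpha$. This holds for every nondegenerate $f$, and therefore also for $\GNP_{L,p}(\Delta)$. The Hodge polygon has the matching symmetry $h_\Delta(k)=h_\Delta(nD-k)$ when $0$ is interior, which is the standard Ehrhart/Stanley reciprocity for the numbers $h_\Delta$. Consequently $\beta_{V_\Delta+1-i}=n-\beta_i$ and $\alpha_{p,V_\Delta+1-i}=n-\alpha_{p,i}$.

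With both symmetries in hand, the first claim is bookkeeping. If $\beta_i>n-1$, then $j:=V_\Delta+1-i$ has $\beta_j=n-\beta_i<1$, so $j\le m_\Delta$ because the first $m_\Delta$ Hodge slopes are exactly those below one. Theorem~\ref{thm:main-L} then gives
\[
|\alpha_{p,i}-\beta_i|=|(n-\alpha_{p,j})-(n-\beta_j)|\le \frac{C_\Delta}{p-1}.
\]

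Now take $n=2$. The slopes $\beta$ lie in $[0,2]$ and are either $<1$, $=1$, or $>1$. Those $<1$ are handled by Theorem~\ref{thm:main-L} and those $>1$ by the previous paragraph. To pass to the height estimate on all of $[0,V_\Delta]$, write $G=\GNP_{L,p}(\Delta)$ and $H=\HP_L(\Delta)$. Both are symmetric, in the sense $G(V_\Delta-x)=G(x)+n(V_\Delta-x)-nV_\Delta+\dots$, i.e.\ the reflected polygon is the same, and they share the endpoints. Symmetry turns the bound on $[0,m_\Delta]$ into the same bound on $[V_\Delta-m_\Delta,V_\Delta]$. On the middle interval $H$ is linear with slope $1$, because all Hodge slopes there equal $1$. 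The polygon $G$ is convex, lies above $H$, and at the two ends of the middle interval lies within $C_\Delta/(p-1)$ of $H$. Convexity then gives $G\le H+C_\Delta/(p-1)$ on the middle interval: $G$ lies below its chord, and the chord lies below $H+C_\Delta/(p-1)$.

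Height closeness implies slope closeness for the middle slopes as well. On the middle interval, $G$ is trapped between two parallel lines, so its slopes there are in $[1-\epsilon,1+\epsilon]$ with $\epsilon$ of order $C_\Delta/(p-1)$. More precisely, a middle slope of $G$ cannot differ from $1$ by more than the slopes next to it already allow, since $G$ is convex and its neighbouring slopes are close to the $\beta$'s, which are $<1$ on the left and $>1$ on the right. Comparing cumulative sums at integer points then shows each $\alpha_{p,i}$ is within $C_\Delta/(p-1)$ of $\beta_i$. This last step is the delicate one, and it may require enlarging $C_\Delta$. The expected main obstacle is justifying the functional equation with the correct identification of Hodge symmetry, namely that $h_\Delta(k)=h_\Delta(nD-k)$ uses interiority of the origin, which I would cite from \cite{AS89}.
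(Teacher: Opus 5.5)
Your proposal is correct and has the same skeleton as the paper's proof. The steps are:
\begin{itemize}
\item[(i)] the symmetry $\alpha\mapsto n-\alpha$ of the Newton slopes, from the functional equation together with the fact that $\chi\mapsto\chi^{-1}$ comes from an automorphism of $\QQ(\zeta_p)$ preserving the valuation at the unique prime over $p$;
\item[(ii)] transfer of the low-slope bound to the slopes above $n-1$;
\item[(iii)] for $n=2$, the identity $e_p(V_\Delta-x)=e_p(x)$ plus convexity on the middle interval, where $\HP_L(\Delta)$ has slope one.
\end{itemize}
The paper phrases (i) as purity, $\overline\gamma=q^n/\gamma$, together with complex conjugation.

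Two small corrections to that part. First, the substitution $\xx\mapsto\xx^{-1}$ plays no role. The real input is purity of weight $n$ from \cite[Theorem~1.3(c)]{DL91}, and this is where interiority of $0$ enters on the Newton side. Second, the clean form of your reflection identity is $G(V_\Delta-x)-G(x)=n(V_\Delta/2-x)$, which holds equally for $\HP_L(\Delta)$.

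The one genuine difference is how you get the Hodge symmetry. You take $h_\Delta(k)=h_\Delta(nD-k)$ as a known combinatorial fact: Stanley reciprocity on the complete fan of cones over the faces of $\Delta$. The paper instead evaluates the Newton symmetry at an ordinary prime supplied by \cite[Theorem~1.6]{Wan04}. Your route is self-contained combinatorics and avoids ordinarity. The paper's route reuses the symmetry it has just proved and needs no separate combinatorial input.

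Your last step is not delicate, and enlarging $C_\Delta$ is not needed. It would also prove a weaker statement than the one asserted, which uses the constants of Theorem~\ref{thm:main-L}. Once $0\le e_p\le C_\Delta/(p-1)$ holds on all of $[0,V_\Delta]$, you get immediately
\[
\alpha_{p,i}-\beta_i=e_p(i)-e_p(i-1)\in[-C_\Delta/(p-1),\,C_\Delta/(p-1)],
\]
exactly as in Proposition~\ref{prop:dwork-reduction}. This is just your ``trapped between parallel lines'' remark applied to unit segments.

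Drop the ``neighbouring slopes'' heuristic. Convexity alone only places the middle slopes in $[\beta_{m_\Delta}-\varepsilon,\,\beta_{V_\Delta-m_\Delta+1}+\varepsilon]$ with $\varepsilon=C_\Delta/(p-1)$. That interval does not pin them near one.
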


For a general integral polytope, the conclusion of
Theorem~\ref{thm:main-L} does not extend to all slopes.
For example, the three-dimensional simplex
\[
 \conv\{0,e_1,e_2,(1,1,3)\}
\]
has Hodge slopes $0$, $4/3$ and $5/3$, whereas its generic Newton slopes are
$0$, $3/2$ and $3/2$ whenever $p\equiv2\pmod3$.
We give the calculation and the analogous examples in higher dimensions
in Section~\ref{sec:obstruction}.

Adolphson and Sperber established the Hodge bound and the cohomological
description of nondegenerate exponential sums \cite{AS89}.
They asked whether the generic Newton polygon equals the Hodge polygon
when $p$ satisfies a suitable congruence condition. Wan developed
decomposition methods for this question and obtained generic
ordinarity in a suitable residue class depending on $\Delta$
\cite{Wan93,Wan04}.

Wan proposed the one-variable asymptotic conjecture in 2000.
Scholten and Zhu proved its first-slope case \cite{SZ02}.
Zhu subsequently proved full one-variable asymptotic
results for generic polynomials over $\QQ$ and over $\overline{\QQ}$
\cite{Zhu03,Zhu04}. Blache obtained multivariable asymptotic results for free sums of
intervals along a lattice basis \cite{Blache11}.

For the general asymptotic conjecture, we put
\[
 E_\Delta:=\left\{\sum_{P\in\Delta\cap\ZZ^n}m_PP:
                  m_P\in\ZZ_{\ge0}\right\}.
\]
Under the hypothesis that $\MM(\Delta)\setminus E_\Delta$ is finite,
Wan \cite[Conjecture~1.11]{Wan04} conjectured that
\eqref{eq:main-L-height} holds on the full interval $[0,V_\Delta]$.
Zhu \cite[Theorem~1.10]{Zhu12} obtained convergence on the full
interval under this hypothesis. This includes all lattice polygons:
every lattice polygon has a unimodular triangulation, and its lattice
points generate $\MM(\Delta)$.
Theorem~\ref{thm:main-L} gives a uniform error of order $p^{-1}$
for the slopes corresponding to Hodge slopes strictly smaller than
one, without a hypothesis on $E_\Delta$.
Example~\ref{ex:nontrivial-low-slopes} gives a three-dimensional simplex
for which $\MM(\Delta)\setminus E_\Delta$ is infinite and the theorem
applies to a nonzero Hodge slope $1/2$.

Related conjectures concern reductions of a fixed generic polynomial
over a number field \cite[Conjecture~1.12]{Wan04} and
$T$-adic characteristic series \cite[Conjecture~7.11]{LW09}.

For Artin--Schreier--Witt towers, Zhu obtained explicit generic slopes
for rectangular polytopes in two variables \cite{Zhu16}. In earlier
work, we studied parallelotope bases and the propagation of contact
points at integral weights for triangular bases \cite{Ren20,Ren23}.
Here we consider additive characters of order $p$ and study
the Newton polygons as $p$ tends to infinity.

Dwork's trace formula reduces the estimate to finitely many principal
minors, each indexed by all lattice points up to a fixed weight. For each residue class
of $p$ modulo the coordinate denominator, we choose a fixed auxiliary
prime $\ell$ in the same class. A calculation with the Cartier operator
at $\ell$ proves that a rational determinant coefficient is nonzero.
This yields a $p$-adic unit coefficient in the corresponding principal
minor for every sufficiently large $p$ in that class.

Below weight one, the chosen coefficient has vertex multiplicities
less than $p$ and other multiplicities at most $\ell-1$.
Hence only the coefficients $1/j!$ with $j<p$ of the Artin--Hasse
exponential occur. The resulting nonvanishing conditions have total degree
$O_\Delta(p)$ and can be satisfied simultaneously over $\FF_{p^2}$.
Convexity then gives the estimate throughout $[0,m_\Delta]$.

\section{Preliminaries and Dwork's trace formula}
\label{sec:dwork}

\subsection{Dwork's trace formula}
\label{sec:trace-formula}

\begin{definition}
For a face $\delta$ of $\Delta$, put
$f_\delta:=\sum_{P\in\delta\cap\ZZ^n}a_P\xx^P$.
A Laurent polynomial with Newton polytope $\Delta$ is nondegenerate
if, for every face $\delta$ not containing zero, the equations
\[
 x_1\frac{\partial f_\delta}{\partial x_1}
 =\cdots=
 x_n\frac{\partial f_\delta}{\partial x_n}=0
\]
have no common solution in $(\overline{\FF}_p^{\times})^n$.
\end{definition}

\begin{notation}\label{not:polygons}
For $t\ge0$, put
\begin{gather*}
\TT_{<t}:=\{P\in\MM(\Delta):w(P)<t\}\\
\text{and}\quad
\TT_{\le t}:=\{P\in\MM(\Delta):w(P)\le t\}.
\end{gather*}
We denote by $\HP_C(\Delta)$ the infinite convex polygon starting at
$(0,0)$ with slopes $w(P)$ for $P\in\MM(\Delta)$, counted with
multiplicity and arranged in nondecreasing order.
\end{notation}

For $k<D$, we have $h_\Delta(k)=W_\Delta(k)$. Hence
\begin{equation}\label{eq:low-Hodge-equal}
 \HP_L(\Delta)(x)=\HP_C(\Delta)(x)\qquad(0\le x\le m_\Delta).
\end{equation}
The characteristic series is
\[
 C_f^*(s):=\exp\left(-\sum_{k\ge1}
       \frac{S_k^*(f)}{(q^k-1)^n}\frac{s^k}{k}\right).
\]
We denote by $\GNP_{C,p}(\Delta)$ the generic Newton polygon of
$C_f^*$ with respect to $v_q$. For characters of conductor $p$,
the characteristic-series polygons in \cite{Ren20,Ren23} have heights
$p-1$ times those used here.
For nondegenerate $f$, the relation with the finite polynomial is
\begin{equation}\label{eq:C-L-product}
 C_f^*(s)=\prod_{j\ge0}L_f^*(q^js)^{(-1)^{n-1}\binom{n+j-1}{n-1}}.
\end{equation}
Hence the slopes of $C_f^*$ are obtained by adding $j$ to each slope
of $L_f^*(s)^{(-1)^{n-1}}$, with multiplicity $\binom{n+j-1}{n-1}$ for every $j\ge0$
\cite[Lemma~2.7]{LW09}. In particular, the slopes strictly below one
come from the factor $L_f^*(s)^{(-1)^{n-1}}$ alone.

We now recall the matrix used in Dwork's trace formula
\cite[Sections~4 and 5]{LW09}. Let
\[
 E_p(z):=\exp\left(\sum_{i\ge0}\frac{z^{p^i}}{p^i}\right)
       \in\ZZ_p[[z]]
\]
be the Artin--Hasse exponential. Its coefficient of $z^k$ is $1/k!$ for $0\le k<p$. Choose a Dwork parameter $\pi$
with $E_p(\pi)=\chi(1)$ and normalize $v_\pi(\pi)=1$.
For $q=p^a$, the valuations satisfy $v_\pi=a(p-1)v_q$.
For Teichm\"uller lifts $\widehat a_R$, put
\begin{gather*}
 E_f(\xx):=\prod_{R\in\Delta\cap\ZZ^n}
 E_p(\pi\widehat a_R\xx^R)
 =\sum_U e_U\xx^U\\
\text{and}\quad
N_{P,Q}:=e_{pP-Q}\quad(P,Q\in \MM(\Delta)).
\end{gather*}
The coefficient of each power of $\pi$ in this product is a Laurent
polynomial. Hence the product is well defined as a formal series in
$\pi$, even when the cone contains lines. The weight inequality gives
\begin{equation}\label{eq:entry-bound}
 v_\pi(N_{P,Q})\ge w(pP-Q)\ge p w(P)-w(Q).
\end{equation}
An entry with $pP-Q$ outside the cone is zero.

After adjoining a $D$th root of $\pi$, use the weighted matrix
\[
 \mathcal D_{P,Q}:=\pi^{w(Q)-w(P)}N_{P,Q}.
\]
Then
\begin{equation}\label{eq:nuclear-bound}
 v_\pi(\mathcal D_{P,Q})\ge(p-1)w(P)\longrightarrow+\infty
 \qquad(w(P)\longrightarrow\infty),
\end{equation}
uniformly in $Q$. Therefore $\mathcal D$ is completely continuous and has the same
principal minors as $N$. By \eqref{eq:nuclear-bound}, we may compute
the determinant expansions on finite truncations and pass to the limit.

For the rest of this subsection, put $q:=p^2$.
Let $\sigma_{\mathrm{Frob}}$ act as Frobenius on the unramified
coefficient field and fix $\pi$. Dwork's trace formula gives
\begin{equation}\label{eq:trace-q2}
 C_f^*(s)=\det(1-s\mathcal D^{\sigma_{\mathrm{Frob}}}\mathcal D).
\end{equation}
The ordinary $q$-adic height is $v_\pi/(2(p-1))$ in this formula.

\begin{lemma}\label{lem:minor-isolation}
Let $0\le t<1$ be a lattice weight and put
\[
 \TT:=\TT_{\le t},\quad
m:=\#\TT,\quad
H:=\sum_{P\in \TT}w(P)
\quad\text{and}\quad
\eta:=\min_{P\notin \TT}w(P)-t.
\]
Then $\eta>0$. Suppose that $f$ is defined over $\FF_q$ and that
\[
 v_\pi\det(N_{P,Q})_{P,Q\in \TT}\le(p-1)H+C.
\]
If $(p-1)\eta>2C$, then
\[
 H\le\NP_{v_q}(C_f^*)(m)\le H+\frac C{p-1}.
\]
\end{lemma}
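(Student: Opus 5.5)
The plan is to read off the coefficient of $s^m$ in $C_f^*(s)=\det(1-s\mathcal D^{\sigma_{\mathrm{Frob}}}\mathcal D)$ from \eqref{eq:trace-q2}. Up to sign, this coefficient $c_m$ is the sum over all $m$-element subsets $S\subset\MM(\Delta)$ of the principal minors $\det\bigl((\mathcal D^{\sigma_{\mathrm{Frob}}}\mathcal D)_{S,S}\bigr)$. By Cauchy--Binet, first on finite truncations and then passing to the limit using \eqref{eq:nuclear-bound}, we get
\[
c_m=\pm\sum_{S,T}\det\bigl(\mathcal D^{\sigma_{\mathrm{Frob}}}_{S,T}\bigr)\det\bigl(\mathcal D_{T,S}\bigr),
\]
where $S,T$ run over $m$-element subsets of $\MM(\Delta)$.

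\textbf{First step: the positivity of $\eta$ and a term-by-term bound.} Since $w(\MM(\Delta))\subset D^{-1}\ZZ$, every $P\notin\TT$ has $w(P)\ge t+1/D$, so $\eta\ge 1/D>0$. Next, $\det\mathcal D_{T,S}=\pi^{\sum_S w-\sum_T w}\det N_{T,S}$. Each term of $\det N_{T,S}$ has valuation at least $p\sum_T w-\sum_S w$ by \eqref{eq:entry-bound}. Hence
\[
v_\pi\det\mathcal D_{T,S}\ge(p-1)\sum_{P\in T}w(P),
\]
and likewise $v_\pi\det\mathcal D^{\sigma_{\mathrm{Frob}}}_{S,T}\ge(p-1)\sum_{P\in S}w(P)$, because $\sigma_{\mathrm{Frob}}$ preserves valuations.

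The set $\TT$ contains every lattice point of weight $\le t$. It is therefore the unique $m$-subset minimizing $\sum w$, with minimum value $H$. Any other $m$-subset must contain a point of weight $\ge t+\eta$ in place of a point of $\TT$ of weight $\le t$, so its weight sum is $\ge H+\eta$. Consequently every term of the Cauchy--Binet sum has valuation $\ge 2(p-1)H$. Dividing by $2(p-1)$ gives $v_q(c_m)\ge H$. The same argument at every index $k$ gives $v_q(c_k)\ge\HP_C(\Delta)(k)$. Because $t$ is a lattice weight and $\TT=\TT_{\le t}$, we have $\HP_C(\Delta)(m)=H$. Since the Newton polygon is the lower convex hull of the points $(k,v_q(c_k))$, this gives the lower bound $\NP_{v_q}(C_f^*)(m)\ge H$.

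\textbf{Second step: the upper bound.} Isolate the diagonal term $S=T=\TT$. There the weight factors cancel, and the term equals $\sigma_{\mathrm{Frob}}(\det N_{\TT,\TT})\cdot\det N_{\TT,\TT}$. Its valuation is $2v_\pi\det(N_{P,Q})_{P,Q\in\TT}\le 2(p-1)H+2C$. Every other pair $(S,T)$ has $S\ne\TT$ or $T\ne\TT$, so by the first step its valuation is $\ge(p-1)(2H+\eta)$. By hypothesis this is $>2(p-1)H+2C$. The infinite sum of these terms converges, and by the ultrametric inequality its valuation stays above this strict threshold. Therefore $v_\pi(c_m)$ equals the valuation of the diagonal term, and $v_\pi(c_m)\le 2(p-1)H+2C$. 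Using $v_q=v_\pi/(2(p-1))$, this gives $v_q(c_m)\le H+C/(p-1)$. Since the Newton polygon lies below each point $(m,v_q(c_m))$, we conclude $\NP_{v_q}(C_f^*)(m)\le H+C/(p-1)$.

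\textbf{Main obstacle.} The delicate point is the rigorous justification of Cauchy--Binet and of the strict ultrametric separation for the infinite sum. I would handle it by truncating to $\TT_{\le R}$. By \eqref{eq:nuclear-bound}, the truncated coefficients converge to $c_m$, and every omitted term has valuation tending to infinity, so the separation by $(p-1)\eta-2C>0$ survives the limit. The combinatorial uniqueness of the minimizing set, which rests on $\TT$ being closed downward under weight, is what makes the single diagonal term dominate.
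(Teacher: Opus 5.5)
Your proof is correct and follows the paper's argument. You expand the coefficient of $s^m$ in \eqref{eq:trace-q2} by Cauchy--Binet, use the termwise bound $v_\pi\det\mathcal D_{T,S}\ge(p-1)\sum_{P\in T}w(P)$ and the gap $\eta$ between $\TT$ and every other $m$-subset, and isolate the diagonal term $S=T=\TT$ when $(p-1)\eta>2C$. The only difference is that you derive the lower bound $H\le\NP_{v_q}(C_f^*)(m)$ directly from the same termwise estimate at every index, whereas the paper cites the Hodge bound of Liu--Wan (Theorem~2.8 of LW09) for this.
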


\begin{proof}
For a finite index set $I$, write $W(I):=\sum_{P\in I}w(P)$.
The set $\TT$ is the unique set of cardinality $m$ of smallest weight.
Every other such set satisfies $W(I)\ge H+\eta$.
By Cauchy--Binet, the coefficient of $(-s)^m$ in
\eqref{eq:trace-q2} is
\[
 \sum_{\#I=\#J=m}
 \det(\mathcal D^{\sigma_{\mathrm{Frob}}}_{I,J})\det(\mathcal D_{J,I}).
\]
The summand $I=J=\TT$ has valuation
$2v_\pi\det N_{\TT,\TT}\le2(p-1)H+2C$.
Every other summand has valuation at least
\[
 (p-1)(W(I)+W(J))\ge2(p-1)H+(p-1)\eta.
\]
Since $(p-1)\eta>2C$, the summand with $I=J=\TT$ has strictly
smaller valuation than every other summand. Dividing by $2(p-1)$ gives
the upper bound. The lower bound follows from
\cite[Theorem~2.8]{LW09}.
\end{proof}

\begin{lemma}\label{lem:formal-specialization}
Let $G(\pi)=\sum_{j\ge j_0}g_j\pi^j$ have coefficients in an
unramified extension of $\ZZ_p$. Suppose that $g_{j'}$ is a unit and
$j'-j_0<p-1$. Then $v_\pi(G(\pi))\le j'$.
\end{lemma}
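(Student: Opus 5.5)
The plan is to compare valuations term by term, using the fact that $\pi$ itself has $v_\pi(\pi)=1$ while the coefficients $g_j$ lie in an unramified extension of $\ZZ_p$. I would first record the valuation of each coefficient in $v_\pi$-units. Because $v_\pi(p)=p-1$ (as $\pi^{p-1}$ and $-p$ differ by a unit, from $E_p(\pi)=\chi(1)$ being a primitive $p$th root of unity), and $g_j$ lies in an unramified extension, each $v_\pi(g_j)$ is either $+\infty$ (if $g_j=0$) or a nonnegative multiple of $p-1$. Hence $v_\pi(g_j\pi^j)=j+(p-1)k_j$ for some integer $k_j\ge0$. In particular the residues of $v_\pi(g_j\pi^j)$ modulo $p-1$ are congruent to $j$.

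Next, I would split $G(\pi)$ as $g_{j'}\pi^{j'}+\sum_{j\ne j'}g_j\pi^j$ and argue that no cancellation can raise the valuation of the unit term above $j'$. The term $g_{j'}\pi^{j'}$ has valuation exactly $j'$. Suppose, for contradiction, that $v_\pi(G(\pi))>j'$. Then the terms of valuation exactly $j'$ must cancel. So there must be another index $j\ne j'$ with $v_\pi(g_j\pi^j)=j'$, that is $j+(p-1)k_j=j'$ with $k_j\ge0$.

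Since $j\ge j_0$ and $j'-j_0<p-1$, we have $0\le j'-j<p-1$ when $k_j\ge 0$. Divisibility by $p-1$ then forces $j=j'$, a contradiction. So $g_{j'}\pi^{j'}$ is the unique term of minimal valuation among those of valuation at most $j'$, and $v_\pi(G(\pi))=\min_j v_\pi(g_j\pi^j)\le j'$.

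This argument compares only finitely many terms, because terms with $j>j'$ have valuation $>j'$, and the series converges since $v_\pi(g_j\pi^j)\ge j\to\infty$. Thus the valuation of the sum is governed by the strict minimum, which is at most $j'$.

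The only delicate point is justifying $v_\pi(g_j)\in(p-1)\ZZ_{\ge0}\cup\{\infty\}$. This requires that the field generated by the unramified coefficients and $\pi$ has ramification index exactly $p-1$ over the unramified base, so $\pi$ is a uniformizer. That holds by the standard theory of the Dwork parameter, since $\QQ_p(\pi)=\QQ_p(\zeta_p)$ is totally ramified of degree $p-1$. Everything else is bookkeeping.
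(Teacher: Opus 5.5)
Your valuation bookkeeping, $v_\pi(g_j\pi^j)=j+(p-1)k_j$ with $k_j\ge0$, is correct and is the same input the paper uses. There is, however, a genuine gap: the contradiction step does not follow. From $v_\pi(G(\pi))>j'$ you cannot conclude that some other single term has valuation exactly $j'$.

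Suppose some $g_j$ with $j_0\le j<j'$ is a unit, for instance $g_{j_0}$. Then the terms of valuation below $j'$ could a priori cancel among themselves to a partial sum of valuation exactly $j'$. That partial sum could in turn cancel $g_{j'}\pi^{j'}$. What $v_\pi(G)>j'$ actually forces is one of two things: either another term has valuation $j'$, or two terms share some valuation $<j'$. You only excluded the first.

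For the same reason, your concluding sentence is false as stated. The term $g_{j'}\pi^{j'}$ is the unique term of valuation exactly $j'$, but it need not have minimal valuation. So the formula $v_\pi(G)=\min_j v_\pi(g_j\pi^j)$ has not yet been justified.

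The repair uses your own computation, applied at every level up to $j'$ rather than only at $j'$.
\begin{itemize}
\item If $v_\pi(g_j\pi^j)\le j'$, then $k_j=0$, because $j+(p-1)\ge j_0+p-1>j'$.
\item Hence every term of valuation at most $j'$ has a unit coefficient and valuation equal to its index, so these valuations are pairwise distinct.
\item The minimum $\mu\le j'$ is therefore attained exactly once, at the least index $e$ with $g_e$ a unit, and $v_\pi(G(\pi))=e\le j'$.
\end{itemize}

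This is exactly the paper's argument. Indices $j<e$ have $p\mid g_j$, so they contribute valuation at least $j_0+p-1>j'\ge e$. Indices $j>e$ contribute valuation greater than $e$. Thus $e$ is the strict minimum.
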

\begin{proof}
Let $e\le j'$ be the smallest index with $g_e$ a unit.
For $j<e$, the coefficient $g_j$ is divisible by $p$, and hence
$v_\pi(g_j\pi^j)\ge j_0+p-1>j'$.
Terms with $j>e$ have valuation greater than $e$.
Therefore $v_\pi(G(\pi))=e\le j'$.
\end{proof}

List the distinct lattice weights below one as
\[
 0=\rho_0<\rho_1<\cdots<\rho_K<1.
\]
For $0\le k\le K$, put
\[
 m_k:=\#\TT_{\le\rho_k}
 \quad\text{and}\quad
 H_k:=\sum_{P\in\TT_{\le\rho_k}}w(P).
\]
Then $m_0=1$, $H_0=0$ and $m_K=m_\Delta$.

\begin{proposition}\label{prop:dwork-reduction}
Let $C\ge0$ and suppose that $p-1>2DC$.
Let $f$ be a nondegenerate Laurent polynomial over $\FF_q$
with Newton polytope $\Delta$. Suppose that
\begin{equation}\label{eq:required-minors}
 v_\pi\det N_{\TT_{\le\rho_k},\TT_{\le\rho_k}}
 \le(p-1)H_k+C
 \qquad(1\le k\le K).
\end{equation}
Then, for $0\le x\le m_\Delta$, we have
\begin{equation}\label{eq:C-uniform}
 \begin{gathered}
 0\le\NP_{v_q}(C_f^*)(x)-\HP_C(\Delta)(x)
 \le\frac{C}{p-1}\\
 \text{and}\quad
 0\le\NP_{v_q}\bigl(L_f^*(s)^{(-1)^{n-1}}\bigr)(x)
       -\HP_L(\Delta)(x)
 \le\frac{C}{p-1}.
 \end{gathered}
\end{equation}
The same inequalities hold for the corresponding generic Newton
polygons. In particular,
\[
 |\alpha_{p,i}-\beta_i|\le\frac{C}{p-1}
 \qquad(1\le i\le m_\Delta).
\]
\end{proposition}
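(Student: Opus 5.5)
Apply Lemma~\ref{lem:minor-isolation} once for each weight $\rho_k$, then pass from the vertices to all of $[0,m_\Delta]$ by convexity.

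\emph{Step 1: the vertices.} Fix $1\le k\le K$ and take $t=\rho_k$ in Lemma~\ref{lem:minor-isolation}, so that $\TT=\TT_{\le\rho_k}$, $m=m_k$ and $H=H_k$. Every lattice weight lies in $D^{-1}\ZZ$. Hence the gap $\eta=\min_{P\notin\TT}w(P)-\rho_k$ is a positive element of $D^{-1}\ZZ$, so $\eta\ge 1/D$. The hypothesis $p-1>2DC$ then gives $(p-1)\eta\ge(p-1)/D>2C$. Together with \eqref{eq:required-minors}, the lemma yields
\[
H_k\le\NP_{v_q}(C_f^*)(m_k)\le H_k+\frac{C}{p-1}.
\]
For $k=0$ we have $m_0=1$, $H_0=0$, and the constant term of $C_f^*$ is $1$. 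The Hodge bound gives a first slope $\ge0$. For the upper bound, apply the same lemma to the $1\times1$ minor $N_{0,0}=e_0$. Its $\pi$-expansion has constant term $1$, so $v_\pi N_{0,0}=0$ and the case $k=0$ follows in the same way.

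\emph{Step 2: interpolation.} $\HP_C(\Delta)$ is linear on each $[m_{k-1},m_k]$ with slope $\rho_k$, and its values at the breakpoints are $H_k$. The Newton polygon $\NP_{v_q}(C_f^*)$ is convex and lies above $\HP_C(\Delta)$ by \cite[Theorem~2.8]{LW09}. On $[m_{k-1},m_k]$ it is bounded above by the chord joining its values at the endpoints. That chord lies within $C/(p-1)$ of the Hodge chord, because both of its endpoints do. This gives the first line of \eqref{eq:C-uniform} on $[0,m_\Delta]$.

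\emph{Step 3: transfer to $L$.} By \eqref{eq:C-L-product}, the slopes of $C_f^*$ strictly below one are exactly the slopes of $L_f^*(s)^{(-1)^{n-1}}$ below one, and every other slope of $C_f^*$ is $\ge1$. The first $m_\Delta$ Hodge slopes are $<1$. Since $\NP(C_f^*)\ge\HP_C$, and the height at $m_\Delta$ is within $C/(p-1)<1/(2D)$ of $H_K$, I expect the first $m_\Delta$ Newton slopes of $C_f^*$ to be $<1$. This is the step to verify carefully: a slope $\ge1$ inside $[0,m_\Delta]$ would push the height above $H_K$ by at least $1-\rho_K\ge1/D$, which is too much. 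Granting this, the two polygons agree on $[0,m_\Delta]$, and \eqref{eq:low-Hodge-equal} gives the second line of \eqref{eq:C-uniform}.

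\emph{Step 4: generic polygons and slopes.} The generic Newton polygon lies below the polygon of any particular $f$, and it still lies above the Hodge polygon, so the generic statement follows. For the slope bound, compare slopes as one-sided derivatives of two convex polygons. Within $[m_{k-1},m_k]$, the bound on heights at the breakpoints only gives a slope error of order $C/(p-1)$ times a factor depending on the segment lengths. To obtain exactly $C/(p-1)$ as stated, I would use the fact that the Hodge polygon is linear on the segment: for $i$ in a segment, the Newton polygon, being convex and squeezed between two lines at distance $C/(p-1)$, has a slope differing from $\rho_k$ by at most $C/(p-1)$ divided by the horizontal distance to the segment's endpoint, which is at least $1$ for integer $i$. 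Making this precise is the main technical point of the last step.
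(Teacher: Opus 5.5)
Your proposal is correct and follows the paper's proof essentially step for step. The paper handles $k=0$ by observing $C_f^*(s)\equiv1-s$ modulo the maximal ideal, which amounts to your use of Lemma~\ref{lem:minor-isolation} at $t=0$. Your Step~3 is the paper's bound of the $m_\Delta$th slope by $\rho_K+C/(p-1)<1$.

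The final slope estimate is easier than you anticipate. Step~2 already gives $0\le e_i:=\GNP_{L,p}(\Delta)(i)-\HP_L(\Delta)(i)\le C/(p-1)$ at every integer $0\le i\le m_\Delta$, not only at the breakpoints. Hence $|\alpha_{p,i}-\beta_i|=|e_i-e_{i-1}|\le C/(p-1)$ at once, which is how the paper concludes.
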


\begin{proof}
Put $\varepsilon:=C/(p-1)$.
Since all lattice weights belong to $D^{-1}\ZZ$, the gap after
each $\rho_k$ is at least $1/D$.
Lemma~\ref{lem:minor-isolation} gives
\begin{equation}\label{eq:cutoff-estimate}
 H_k\le\NP_{v_q}(C_f^*)(m_k)
 \le H_k+\varepsilon
 \qquad(1\le k\le K).
\end{equation}
The entry $\mathcal D_{0,0}$ is congruent to $1$ modulo the
maximal ideal. Every other entry has positive valuation.
For the nonzero rows this follows from \eqref{eq:nuclear-bound},
and for the origin row it follows from
$\mathcal D_{0,Q}=\pi^{w(Q)}e_{-Q}$ for $Q\ne0$.
Hence \eqref{eq:trace-q2} gives
$C_f^*(s)\equiv1-s$ modulo the maximal ideal.
Its Newton polygon therefore has height zero on $[0,1]$.

For $x=(1-\theta)m_{k-1}+\theta m_k$, where $0\le\theta\le1$,
convexity and \eqref{eq:cutoff-estimate} give
\begin{align*}
 \NP_{v_q}(C_f^*)(x)
 &\le(1-\theta)\NP_{v_q}(C_f^*)(m_{k-1})
       +\theta\NP_{v_q}(C_f^*)(m_k)\\
 &\le(1-\theta)H_{k-1}+\theta H_k+\varepsilon\\
 &=\HP_C(\Delta)(x)+\varepsilon.
\end{align*}
Together with the Hodge lower bound, this proves the first
inequality. It also covers $K=0$, when $m_\Delta=1$.

Since $\rho_K\le1-1/D$, the last of the first
$m_\Delta$ characteristic-series slopes is at most
\[
 \NP_{v_q}(C_f^*)(m_\Delta)
 -\NP_{v_q}(C_f^*)(m_\Delta-1)
 \le\rho_K+\varepsilon<1.
\]
Every factor indexed by $j\ge1$ in
\eqref{eq:C-L-product} has slopes at least one.
Thus the first $m_\Delta$ slopes of $C_f^*$ are exactly the
first $m_\Delta$ slopes of $L_f^*(s)^{(-1)^{n-1}}$.
The second inequality follows from \eqref{eq:low-Hodge-equal}.

The generic Newton polygons lie below those of $f$ and above
the corresponding Hodge polygons. Hence they satisfy the same
bounds. For $0\le i\le m_\Delta$, put
$e_i:=\GNP_{L,p}(\Delta)(i)-\HP_L(\Delta)(i)$.
Then
\[
 0\le e_i\le\varepsilon
 \quad\text{and}\quad
 \alpha_{p,i}-\beta_i=e_i-e_{i-1}\quad(i\ge1).
\]
This proves the slope estimate.
\end{proof}

\subsection{A simplicial subdivision}
\label{sec:preliminaries}

\begin{notation}\label{not:subdivision}
Put $V:=\Vertices(\Delta)\setminus\{0\}$.
Consider the fan consisting of the cones over the facets of $\Delta$
not containing $0$ and all their faces, as in the facial decomposition
of \cite[Section~3.1]{Wan04}.
Fix a simplicial refinement $\Sigma$ with the same rays such that
the morphism of toric varieties induced by the refinement is projective
\cite[Proposition~11.1.7]{CLS11}.
Thus every maximal cone of $\Sigma$ is generated by $n$ linearly
independent vertices in $V$, the fan contains all faces of its cones
and its union is $\Cone(\Delta)$.

For $P\in\Cone(\Delta)$, let $\sigma_P$ be the unique cone of
$\Sigma$ whose relative interior contains $P$. Define the coordinates
$\lambda_v(P)$ by
\[
P=\sum_{v\in V}\lambda_v(P)v,\quad
\lambda_v(P)\ge0
\quad\text{and}\quad
\lambda_v(P):=0\quad(v\notin\sigma_P).
\]
These coordinates are unique and agree on common faces. Since every
cone lies over a facet on which $w=1$, we have
\[
\sum_{v\in V}\lambda_v(P)=w(P).
\]
Let $D_\Sigma$ be the least positive integer such that
\[
D_\Sigma\lambda_v(P)\in\ZZ
\qquad(P\in\MM(\Delta),\ v\in V).
\]
Such an integer exists by Cramer's rule on each maximal cone, and
$D\mid D_\Sigma$.
\end{notation}

\begin{lemma}\label{lem:cone-weights}
There is a function $\psi\colon\Cone(\Delta)\longrightarrow\RR_{\ge0}$
with the following properties.
\begin{enumerate}[leftmargin=*,nosep]
\item On each cone of $\Sigma$, the function $\psi$ is the restriction
of a linear form with rational coefficients. In particular,
\begin{equation}\label{eq:psi-definition}
\psi(P)=\sum_{v\in V}\psi(v)\lambda_v(P),
\end{equation}
and $\psi(P)>0$ for $P\ne0$.
\item The function $\psi$ is convex and homogeneous, and
\begin{equation}\label{eq:psi-properties}
\psi(P+Q)\le\psi(P)+\psi(Q)
\qquad(P,Q\in\Cone(\Delta)).
\end{equation}
\item If $P=X_1+\cdots+X_a$ with $X_i\in\Cone(\Delta)$, then
\begin{equation}\label{eq:cone-equality}
\psi(P)=\sum_{i=1}^a\psi(X_i)
\quad\Longleftrightarrow\quad
X_i\in\sigma_P\quad(1\le i\le a).
\end{equation}
\end{enumerate}
\end{lemma}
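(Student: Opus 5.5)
The approach is to take $\psi$ to be a strictly convex support function of the fan $\Sigma$. Such a function is exactly what the projectivity of the refinement provides. By \cite[Proposition~11.1.7]{CLS11} and the standard correspondence between projective (relative) toric morphisms and strictly convex piecewise-linear support functions, the refinement being projective over the facial fan means the following. There is a piecewise-linear function $\phi$ on $\Cone(\Delta)$, linear with rational coefficients on each cone of $\Sigma$, which is strictly convex with respect to $\Sigma$ relative to the facial fan. Concretely, on each cone $\tau$ of the facial fan (the cone over a facet $F$ not containing $0$), $\phi|_\tau$ is convex and its maximal domains of linearity are exactly the maximal cones of $\Sigma$ inside $\tau$. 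The weight $w$ is itself piecewise linear on the facial fan, linear on each $\tau$, and globally convex: it is the gauge of $\Delta$, hence sublinear. I would put
\[
\psi:=Nw+\phi
\]
for a large rational $N$. Then (1) is immediate: $\psi$ is linear on each cone of $\Sigma$ with rational coefficients, so it is determined by its values at the rays, which gives \eqref{eq:psi-definition}. Positivity for $P\neq0$ follows by taking $N$ large, since $w(v)=1$ on $V$ and only finitely many rays occur, so $\psi(v)>0$ for every $v\in V$.

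For (2), homogeneity is clear. The hard part is global convexity, since $\phi$ is only convex within each facial cone. The first thing I would try is the standard patching argument. Convexity of a continuous piecewise-linear function on a convex cone is a local condition across codimension-one walls. At a wall interior to a facial cone, convexity comes from $\phi$, and $w$ is linear there. At a wall between two distinct facial cones, $w$ has a strictly positive bend (distinct facets give distinct supporting linear forms), while the bend of $\phi$ across that wall is some fixed finite quantity. Taking $N$ large enough, finitely many such walls and rational data, makes all bends of $\psi$ positive. Hence $\psi$ is convex, and subadditivity \eqref{eq:psi-properties} follows from convexity plus homogeneity. I expect this to be the main obstacle: choosing $\phi$ with the needed strict convexity, and verifying that the local wall condition implies global convexity on $\Cone(\Delta)$ (for instance via a line-segment argument where each segment crosses finitely many walls generically, then density and continuity). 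If it is more convenient, I would instead take $\phi$ to be a global strictly convex support function of a projective completion and restrict, which sidesteps patching.

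For (3), I would use that strict convexity of $\psi$ relative to $\Sigma$ means the following: for each maximal cone $\sigma$ the linear form $\ell_\sigma$ agreeing with $\psi$ on $\sigma$ satisfies $\psi\ge\ell_\sigma$ everywhere, with equality exactly on $\sigma$. Pick a maximal $\sigma\supseteq\sigma_P$. For the direction ($\Leftarrow$), if all $X_i\in\sigma_P$, linearity on $\sigma_P$ gives equality. For ($\Rightarrow$), write $\psi(P)=\ell(P)=\sum\ell(X_i)\le\sum\psi(X_i)$, where $\ell$ is a linear form equal to $\psi$ on $\sigma_P$ and strictly less than $\psi$ off $\sigma_P$. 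Such an $\ell$ exists by averaging the $\ell_\sigma$ over the maximal cones $\sigma\supseteq\sigma_P$, using that their common equality locus is $\sigma_P$. Equality then forces $\psi(X_i)=\ell(X_i)$ for every $i$, hence $X_i\in\sigma_P$.
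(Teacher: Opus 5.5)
Your construction matches the paper's up to scaling. $Nw+\phi$ is $N$ times the paper's $\psi=\sum_v(1+\varepsilon h_v)\lambda_v$ with $\varepsilon=1/N$, where $\phi$ plays the role of the paper's $h$, the relatively strictly convex support function coming from projectivity of the refinement. Parts (1) and (2) are correct as you outline them. Your wall-crossing argument works: inside a facial cone only $\phi$ bends, and it bends positively. Across a wall between $\Cone(F_1)$ and $\Cone(F_2)$, the positive bend of $w$ dominates for $N$ large.

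The paper avoids the local-to-global step by comparing values at the vertices. Fix a maximal $\sigma\subseteq\Cone(F)$ with linear form $L_\sigma$. Then $\psi(v)\ge L_\sigma(v)$ for every $v\in V$, with equality iff $v\in\sigma$. For $v\in F$ this is relative strict convexity of $\phi$; for $v\notin F$ the term $N(1-w_F(v))>0$ dominates. The identity $\psi(P)-L_\sigma(P)=\sum_v\lambda_v(P)\bigl(\psi(v)-L_\sigma(v)\bigr)$ then gives at once $\psi=\max_\sigma L_\sigma$ and $\{\psi=L_\sigma\}=\sigma$. You invoke the second fact in (3) under the name ``strict convexity''. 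Positivity of the bends does imply it, but only after a further argument you do not give; the vertex comparison yields it for free.

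The genuine gap is in (3). You assert that the maximal cones $\sigma_1,\ldots,\sigma_k$ containing $\sigma_P$ satisfy $\sigma_1\cap\cdots\cap\sigma_k=\sigma_P$. From this you conclude that their average lies strictly below $\psi$ off $\sigma_P$. This fails when $\sigma_P$ lies in the boundary of $\Cone(\Delta)$, a case the lemma must cover whenever $0\in\partial\Delta$. Take $\Delta=\conv\{0,e_1,e_2\}$: the fan $\Sigma$ has the single maximal cone $\RR_{\ge0}^2$, on which $\psi$ is linear. For $P=e_1$ your averaged form coincides with $\psi$ on the whole quadrant, so the argument only yields $X_i\in\RR_{\ge0}^2$. (The intersection does equal $\sigma_P$ when the relative interior of $\sigma_P$ meets the interior of $\Cone(\Delta)$.)

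The repair is the face property, which is what the paper uses. Equality with a single $\ell_\sigma$, for a maximal $\sigma\supseteq\sigma_P$, gives $X_i\in\sigma$ for all $i$. Since $\sigma_P$ is a face of $\sigma$, pick a linear form $g\ge0$ on $\sigma$ with $\sigma\cap\{g=0\}=\sigma_P$. Then $0=g(P)=\sum_ig(X_i)$ forces every $X_i\in\sigma_P$. The paper writes this step as $0=\lambda_v(P)=\sum_i\lambda_v(X_i)$ for $v\notin\sigma_P$, using the coordinates of the simplicial cone $\sigma$.
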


\begin{proof}
(1) Taking the negative of the divisor support function supplied by
\cite[Theorem~7.2.12]{CLS11}, we obtain rational numbers $h_v$
and linear forms $h_\sigma$ with the following property.
If an $n$-dimensional cone $\sigma\in\Sigma$ lies in the cone over a
facet $F$ of $\Delta$ not containing $0$, then
\[
h_\sigma(v)=h_v\quad(v\in V\cap\sigma)
\quad\text{and}\quad
h_\sigma(v)<h_v\quad(v\in V\cap(F\setminus\sigma)).
\]
Let $w_F$ be the linear form defining $F$, normalized by $w_F|_F=1$.
For a positive rational number $\varepsilon$, put
\[
c_v:=1+\varepsilon h_v
\quad\text{and}\quad
L_\sigma:=w_F+\varepsilon h_\sigma.
\]
Then
\[
c_v-L_\sigma(v)
=1-w_F(v)+\varepsilon\bigl(h_v-h_\sigma(v)\bigr).
\]
For $v\in F\setminus\sigma$, the first difference is zero and the
second is positive. For $v\notin F$, we have $1-w_F(v)>0$.
Since there are finitely many pairs $(\sigma,v)$, a sufficiently
small $\varepsilon>0$ gives $c_v>0$ and
\begin{equation}\label{eq:strict-vertex-inequality}
L_\sigma(v)=c_v\quad(v\in V\cap\sigma)
\quad\text{and}\quad
L_\sigma(v)<c_v\quad(v\in V\setminus\sigma).
\end{equation}
Define
\[
\psi(P):=\sum_{v\in V}c_v\lambda_v(P).
\]
The coordinates agree on common faces, hence $\psi$ is linear with
rational coefficients on every cone of $\Sigma$. Also $\psi(v)=c_v$ and
\[
\psi(P)\ge\min_{v\in V}c_v\,w(P)>0\qquad(P\ne0).
\]

(2) For every $n$-dimensional cone $\sigma\in\Sigma$ and every
$P\in\Cone(\Delta)$,
\[
\psi(P)-L_\sigma(P)
=\sum_{v\in V}\lambda_v(P)\bigl(c_v-L_\sigma(v)\bigr)\ge0.
\]
Equality holds precisely when every vertex with $\lambda_v(P)>0$
belongs to $\sigma$, or equivalently when $P\in\sigma$.
Therefore
\begin{equation}\label{eq:psi-linear-forms}
\psi(P)=\max_{\substack{\sigma\in\Sigma\\\dim\sigma=n}}L_\sigma(P)
\quad\text{and}\quad
\psi(P)=L_\sigma(P)\Longleftrightarrow P\in\sigma.
\end{equation}
It follows that $\psi$ is convex, homogeneous and subadditive.

(3) Suppose that $\psi(P)=\sum_i\psi(X_i)$, and take an
$n$-dimensional cone $\sigma$ containing $P$. Then
\[
0=\sum_{i=1}^a\bigl(\psi(X_i)-L_\sigma(X_i)\bigr).
\]
Each summand is nonnegative. Hence $X_i\in\sigma$ for every $i$.
For $v\notin\sigma_P$, the coordinates within $\sigma$ give
\[
0=\lambda_v(P)=\sum_{i=1}^a\lambda_v(X_i).
\]
Thus $X_i\in\sigma_P$ for every $i$. The converse follows from
the linearity of $\psi$ on $\sigma_P$.
\end{proof}

\begin{notation}\label{not:residue-class}
Fix a function $\psi$ as in Lemma~\ref{lem:cone-weights}.
This choice depends only on $\Delta$ and the fixed subdivision.
For the construction in Sections~\ref{sec:auxiliary-prime}
and~\ref{sec:principal-minor}, fix a residue class
$r\in(\ZZ/D_\Sigma\ZZ)^\times$ and consider primes
\[
p\equiv r\pmod{D_\Sigma}.
\]
\end{notation}

\section{An auxiliary prime}
\label{sec:auxiliary-prime}

By Dirichlet's theorem, choose an auxiliary prime $\ell$ such that
\[
\ell>m_\Delta
\quad\text{and}\quad
\ell\equiv r\pmod{D_\Sigma}.
\]
The prime $\ell$ remains fixed as $p$ varies.

\subsection{A coefficient of an exponential determinant}
\label{sec:auxiliary-coefficient}

\begin{lemma}\label{lem:aux-cartier-jacobian}
Let $\ell$ be a prime and put
$R:=\FF_\ell[z_1,\ldots,z_m]$. For a polynomial
$G=\sum_{\underline u}c_{\underline u}\underline z^{\underline u}$,
define
\[
\mathcal C_\ell(G):=
\sum_{\underline b\in\ZZ_{\ge0}^m}
c_{\ell\underline b+(\ell-1)\underline1}
\underline z^{\underline b}
\quad\text{and}\quad
\underline1:=(1,\ldots,1).
\]
This is the Cartier operator on the coefficients of polynomial top forms.
For $F_1,\ldots,F_m\in R$, let
\[
J(\underline F):=
\det\left(\frac{\partial F_i}{\partial z_j}\right)_{1\le i,j\le m}.
\]
Then
\begin{equation}\label{eq:aux-cartier-jacobian}
\mathcal C_\ell\left(
J(\underline F)\prod_{i=1}^mF_i^{\ell-1}
\right)=J(\underline F).
\end{equation}
\end{lemma}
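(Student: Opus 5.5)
The plan is to interpret both sides of \eqref{eq:aux-cartier-jacobian} as statements about top differential forms on $\mathbb{A}^m_{\FF_\ell}$. We then deduce the identity from the standard properties of the Cartier isomorphism in characteristic $\ell$.

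Write $\Omega^\bullet$ for the de Rham complex of $R$ over $\FF_\ell$, with closed forms $Z^\bullet$, exact forms $B^\bullet$ and cohomology $H^\bullet$. Since $dF_1\wedge\cdots\wedge dF_m=J(\underline F)\,dz_1\wedge\cdots\wedge dz_m$, the argument of $\mathcal C_\ell$ is the coefficient of the top form
\[
\omega:=\bigwedge_{i=1}^m F_i^{\ell-1}dF_i .
\]

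Recall the inverse Cartier map $C^{-1}\colon\Omega^\bullet\to H^\bullet$. It is the $\ell$-linear ring homomorphism determined by $a\mapsto a^\ell$ and $da\mapsto[a^{\ell-1}da]$, and it is an isomorphism (Cartier). The Cartier operator $C\colon Z^\bullet\to\Omega^\bullet$ is the composition of the projection $Z^\bullet\to H^\bullet$ with $(C^{-1})^{-1}$.

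\textbf{Step 1: the coordinate formula.} We check that on $Z^m=\Omega^m$ this operator $C$ is given by the formula for $\mathcal C_\ell$. We have
\[
C^{-1}(\underline z^{\underline b}\,dz_1\wedge\cdots\wedge dz_m)
=\underline z^{\ell\underline b+(\ell-1)\underline 1}\,dz_1\wedge\cdots\wedge dz_m .
\]
Every other monomial top form $\underline z^{\underline u}dz_1\wedge\cdots\wedge dz_m$, with some $u_j\not\equiv\ell-1\pmod\ell$, is exact. Indeed, it equals $\pm d\bigl(\underline z^{\underline u}z_j(u_j+1)^{-1}\,\widehat{dz_j}\bigr)$ up to the obvious sign. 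Hence $C$ kills it, and $C$ agrees with $\mathcal C_\ell$ on coefficients.

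\textbf{Step 2: multiplicativity.} Since $C^{-1}$ is a ring isomorphism onto $H^\bullet$, and $Z^\bullet\wedge Z^\bullet\subset Z^\bullet$ maps onto the cup product, we get $C(\alpha\wedge\beta)=C(\alpha)\wedge C(\beta)$ for closed forms $\alpha,\beta$.

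\textbf{Step 3: the one-form case.} Each $F_i^{\ell-1}dF_i$ is closed, because $d(F_i^{\ell-1})\wedge dF_i=0$. By the definition of $C^{-1}$ we have $C(F_i^{\ell-1}dF_i)=dF_i$.

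\textbf{Conclusion.} Combining the three steps,
\[
C(\omega)=\bigwedge_{i=1}^m dF_i=J(\underline F)\,dz_1\wedge\cdots\wedge dz_m ,
\]
which is \eqref{eq:aux-cartier-jacobian}.

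The main obstacle is only to make sure that the invoked facts are valid in this polynomial setting and not merely on cohomology. These facts are the Cartier isomorphism for $\FF_\ell[z_1,\ldots,z_m]$ and the multiplicativity of $C$ on closed forms. The concern is that $C$ is defined on $Z^\bullet$ rather than on all forms. In top degree $Z^m=\Omega^m$, so there is no ambiguity, and the exactness claim of Step 1 makes the identification with $\mathcal C_\ell$ explicit.

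If one prefers to avoid citing Cartier's theorem, the fallback is a direct computation. One would prove the identity first for $m=1$, using $F^{\ell-1}F'=(F^\ell)'/\ell$ lifted to characteristic zero, or directly by expanding. Then one would proceed by induction, using that $\mathcal C_\ell(H^\ell G)=H\,\mathcal C_\ell(G)$ and that $\mathcal C_\ell$ kills divergences of polynomial vector fields.
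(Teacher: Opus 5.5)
Your proposal is correct and is essentially the paper's argument. The paper also works with the class $\delta(F)=[F^{\ell-1}dF]$ (your $C^{-1}(dF)$), obtains $[J(\underline F)\prod_i F_i^{\ell-1}\,dz_1\wedge\cdots\wedge dz_m]=[J(\underline F)^\ell\prod_j z_j^{\ell-1}\,dz_1\wedge\cdots\wedge dz_m]$ by wedging, and then applies $\mathcal C_\ell$. The difference is that the paper proves by hand the part of Cartier theory it needs, namely additivity of $\delta$ via an explicit primitive and $\delta(FG)=F^\ell\delta(G)+G^\ell\delta(F)$. It then uses only that $\mathcal C_\ell$ kills derivatives and satisfies $\mathcal C_\ell(A^\ell B)=A\,\mathcal C_\ell(B)$, so it avoids the bijectivity of $C^{-1}$ on which your definition of $C$ and your Step 2 rely.
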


\begin{proof}
Write $\Omega_R^\bullet$ for the polynomial de Rham complex of $R$ over
$\FF_\ell$. Square brackets denote the cohomology class of a closed
form. For $F\in R$, put
\[
\delta(F):=[F^{\ell-1}dF]\in H^1(\Omega_R^\bullet).
\]
The form $F^{\ell-1}dF$ is closed. Direct differentiation gives
\[
\delta(FG)=F^\ell\delta(G)+G^\ell\delta(F).
\]
It also gives
\begin{equation*}
(F+G)^{\ell-1}d(F+G)-F^{\ell-1}dF-G^{\ell-1}dG=
d\left(
\sum_{j=1}^{\ell-1}\frac{(-1)^{j-1}}jF^jG^{\ell-j}
\right).
\end{equation*}
Consequently, $\delta$ is additive. Applying these two identities to
monomials and then to polynomials yields
\begin{equation}\label{eq:aux-delta-derivative}
\delta(F)=
\sum_{j=1}^m
\left(\frac{\partial F}{\partial z_j}\right)^\ell
[z_j^{\ell-1}dz_j].
\end{equation}
Multiplication by an $\ell$th power is well defined on de Rham cohomology,
since its differential is zero.

Taking the exterior product of \eqref{eq:aux-delta-derivative} for
$F_1,\ldots,F_m$, we obtain
\begin{equation*}
\left[
J(\underline F)\prod_{i=1}^mF_i^{\ell-1}
\,dz_1\wedge\cdots\wedge dz_m
\right]=
\left[
J(\underline F)^\ell
\prod_{j=1}^mz_j^{\ell-1}
\,dz_1\wedge\cdots\wedge dz_m
\right].
\end{equation*}
Thus the difference of these two top forms is exact.

For every $G\in R$ and every $j$, the definition of $\mathcal C_\ell$
implies
\[
\mathcal C_\ell\left(\frac{\partial G}{\partial z_j}\right)=0.
\]
Indeed, a monomial whose exponent of $z_j$ is congruent to $\ell-1$
could only arise by differentiating a monomial whose exponent of $z_j$
is divisible by $\ell$. Its derivative has coefficient zero.
It follows that $\mathcal C_\ell$ maps the coefficient of every
exact top form to zero. The definition also implies
\[
\mathcal C_\ell(A^\ell B)=A\mathcal C_\ell(B)
\qquad(A,B\in R).
\]
Applying $\mathcal C_\ell$ to the coefficients of the two top forms
gives \eqref{eq:aux-cartier-jacobian}.
\end{proof}

\begin{lemma}\label{lem:aux-ray-coefficient}
Let $m\ge1$ and let $\ell>m$ be a prime. Define
\[
h_k(\aa):=[z^k]
\exp\left(\sum_{j=1}^ma_jz^j\right)
\qquad(k\ge0),
\]
and put $h_k:=0$ for $k<0$. Then
\begin{equation}\label{eq:aux-ray-coefficient}
\left[\prod_{j=1}^ma_j^{\ell-1}\right]
\det(h_{\ell i-j})_{1\le i,j\le m}
\equiv\frac1{((\ell-1)!)^m}\pmod\ell.
\end{equation}
The coefficient on the left belongs to $\ZZ_{(\ell)}$.
\end{lemma}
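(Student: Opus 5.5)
The idea is to encode the determinant as a coefficient extraction in auxiliary variables $y_1,\ldots,y_m$, one for each row. After extracting $\prod_j a_j^{\ell-1}$, the integrand becomes a Jacobian of power sums times $\prod_j p_j^{\ell-1}$. Lemma~\ref{lem:aux-cartier-jacobian} then evaluates the resulting coefficient modulo $\ell$.

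\emph{Integrality.} Each $h_k(\aa)$ is a sum of monomials $\prod_j a_j^{e_j}$ with $\sum_j je_j=k$ and coefficient $\prod_j 1/e_j!$. In the coefficient of $\prod_j a_j^{\ell-1}$ in $\det(h_{\ell i-j})$, every row contributes exponents $e_{ij}$ with $\sum_i e_{ij}=\ell-1$. So only factorials $e!$ with $e\le\ell-1$ occur, and the coefficient lies in $\ZZ_{(\ell)}$. This justifies reducing modulo $\ell$ at the end; all manipulations below take place in $\ZZ_{(\ell)}$.

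\emph{Rewriting the determinant.} Since $h_{\ell i-j}$ is the coefficient of $y_i^{\ell i-j}$ in $\exp\bigl(\sum_k a_ky_i^k\bigr)$, multilinearity of the determinant in its rows gives
\[
\det(h_{\ell i-j})=\Bigl[\prod_i y_i^{\ell i}\Bigr]\,\det(y_i^{j})_{i,j}\,\exp\Bigl(\sum_k a_k p_k(\underline y)\Bigr),
\qquad p_k:=\sum_i y_i^k .
\]
The coefficient of $\prod_k a_k^{\ell-1}$ in the exponential is $\prod_k p_k^{\ell-1}/((\ell-1)!)^m$.

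Next, $\det(y_i^j)=\prod_i y_i\cdot\det(y_i^{j-1})$. Also $J(p_1,\ldots,p_m)=\det(jy_i^{j-1})=m!\,\det(y_i^{j-1})$, and $m!$ is an $\ell$-adic unit because $\ell>m$. Hence the left side of \eqref{eq:aux-ray-coefficient} equals
\[
\frac{1}{m!\,((\ell-1)!)^m}\Bigl[\prod_i y_i^{\ell(i-1)+(\ell-1)}\Bigr]\,J(\underline p)\prod_k p_k^{\ell-1}.
\]

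\emph{Applying the Cartier lemma.} Reduce modulo $\ell$ and use the definition of $\mathcal C_\ell$ in Lemma~\ref{lem:aux-cartier-jacobian} with $\underline b=(0,1,\ldots,m-1)$. The bracketed coefficient is the coefficient of $\prod_i y_i^{i-1}$ in $\mathcal C_\ell\bigl(J(\underline p)\prod_k p_k^{\ell-1}\bigr)$. By \eqref{eq:aux-cartier-jacobian}, this is the coefficient of $\prod_i y_i^{i-1}$ in $J(\underline p)=m!\det(y_i^{j-1})$. In the Vandermonde determinant this monomial arises only from the identity permutation, with coefficient $1$, so the bracketed coefficient is $m!$. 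The factor $m!$ cancels, giving $1/((\ell-1)!)^m$.

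The main obstacle is the rewriting step. One must check that the row-wise variable splitting reproduces exactly the signs and the index shifts $\ell i-j$. One must also check that the extracted exponents $\ell i-1=\ell(i-1)+(\ell-1)$ match the Cartier pattern $\ell\underline b+(\ell-1)\underline 1$. Once this bookkeeping is confirmed, the rest is direct.
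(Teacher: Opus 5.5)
Your proof is correct and follows the paper's argument essentially step for step. You encode the determinant by row variables with a Vandermonde factor, extract $\prod_k p_k^{\ell-1}/((\ell-1)!)^m$, write the Vandermonde as $J(\underline p)/m!$, and read off the coefficient of $\prod_i y_i^{\ell i-1}$ from Lemma~\ref{lem:aux-cartier-jacobian} with $\underline b=(0,1,\ldots,m-1)$. The only differences are cosmetic: you carry the factor $m!$ until the end, and you check $\ell$-integrality directly from the row exponents.
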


\begin{proof}
Put
\[
\mathcal V(\underline z):=\det(z_i^{j-1})_{1\le i,j\le m}
=\prod_{i<j}(z_j-z_i)
\quad\text{and}\quad
F_j(\underline z):=\sum_{i=1}^mz_i^j.
\]
Expanding the determinant gives
\begin{align*}
\det(h_{\ell i-j})_{1\le i,j\le m}
&=
\left[\prod_{i=1}^mz_i^{\ell i-1}\right]
\mathcal V(\underline z)
\prod_{i=1}^m
\exp\left(\sum_{j=1}^ma_jz_i^j\right)\\
&=
\left[\prod_{i=1}^mz_i^{\ell i-1}\right]
\mathcal V(\underline z)
\exp\left(\sum_{j=1}^ma_jF_j(\underline z)\right).
\end{align*}
Therefore the coefficient in \eqref{eq:aux-ray-coefficient} is
\begin{equation}\label{eq:aux-ray-vandermonde}
\frac1{((\ell-1)!)^m}
\left[\prod_{i=1}^mz_i^{\ell i-1}\right]
\mathcal V(\underline z)\prod_{j=1}^mF_j(\underline z)^{\ell-1}.
\end{equation}
In particular, it belongs to $\ZZ_{(\ell)}$.

Reduce the polynomials in \eqref{eq:aux-ray-vandermonde} modulo $\ell$.
The Jacobian of $F_1,\ldots,F_m$ satisfies
\[
J(F_1,\ldots,F_m)=m!\mathcal V(\underline z).
\]
Since $\ell>m$, Lemma~\ref{lem:aux-cartier-jacobian} gives
\[
\mathcal C_\ell\left(
\mathcal V(\underline z)\prod_{j=1}^mF_j(\underline z)^{\ell-1}
\right)=\mathcal V(\underline z).
\]
The coefficient of $\prod_i z_i^{\ell i-1}$ on the left before applying
$\mathcal C_\ell$ is the coefficient of $\prod_i z_i^{i-1}$ in
$\mathcal V(\underline z)$, which is $1$. Substituting this into
\eqref{eq:aux-ray-vandermonde} proves the congruence.
\end{proof}

\begin{proposition}\label{prop:aux-lattice-coefficient}
Let $\SS\subset\ZZ^n\setminus\{0\}$ be a finite nonempty set.
Assume that, for every positive ray $\rho$ meeting $\SS$, there are a
primitive lattice vector $P_\rho$ and an integer $m_\rho\ge1$ such that
\[
\SS\cap\rho=\{P_\rho,2P_\rho,\ldots,m_\rho P_\rho\}.
\]
Let $\ell>\max_\rho m_\rho$ be a prime. For $P,Q\in \SS$, define
\[
g_{P,Q}(\aa):=
[\xx^{\ell P-Q}]
\exp\left(\sum_{R\in \SS}a_R\xx^R\right).
\]
Then
\begin{equation}\label{eq:aux-lattice-coefficient}
\left[\prod_{R\in \SS}a_R^{\ell-1}\right]
\det(g_{P,Q})_{P,Q\in \SS}
\equiv\frac1{((\ell-1)!)^{\#\SS}}
\not\equiv0\pmod\ell.
\end{equation}
The coefficient on the left belongs to $\ZZ_{(\ell)}$.
\end{proposition}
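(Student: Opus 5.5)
The plan is to reduce to Lemma~\ref{lem:aux-ray-coefficient} by showing that, modulo $\ell$, the extracted coefficient factors over the rays. First I isolate which monomials can contribute. Expand $\det(g_{P,Q})=\sum_{\tau}\sgn(\tau)\prod_{P\in\SS}g_{P,\tau(P)}$. Each $g_{P,Q}$ is a sum over multisets $(k_R)_{R\in\SS}$ with $\sum_Rk_RR=\ell P-Q$ of $\prod_R a_R^{k_R}/k_R!$. The coefficient of $\prod_Ra_R^{\ell-1}$ in a product over $P$ requires total multiplicities $\sum_P k_R^{(P)}=\ell-1$ for each $R$. Every $k_R^{(P)}\le\ell-1$, so all factorials are $\ell$-adic units, and hence the coefficient lies in $\ZZ_{(\ell)}$. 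This settles the integrality claim.

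Next I show that only ``ray-preserving'' terms survive. Summing over $P$, the constraint gives $\sum_P(\ell P-\tau(P))=\ell\sum_PP-\sum_PP=(\ell-1)\sum_PP$, which matches $\sum_R(\ell-1)R$ automatically. So the global count imposes nothing, and a finer argument is needed. I will introduce a grading that separates rays. Choose a generic linear form, or better, substitute $a_R\mapsto a_R t_\rho$ for $R\in\rho$, with independent variables $t_\rho$ per ray. I expect the honest route, however, to be the Cartier-operator argument redone in the multivariable setting. Following the proof of Lemma~\ref{lem:aux-ray-coefficient}, I write $\det(g_{P,Q})$ as the coefficient of $\prod_P \xx_P^{\ell P}$ in $\det(\xx_P^{-Q})\prod_P\exp(\sum_Ra_R\xx_P^R)$ using separate variable vectors $\xx_P$ for each row. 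Extracting $\prod a_R^{\ell-1}$ then gives
\[
\frac{1}{((\ell-1)!)^{\#\SS}}\Bigl[\prod_P\xx_P^{\ell P}\Bigr]\det(\xx_P^{-Q})\prod_{R}\Bigl(\sum_P\xx_P^R\Bigr)^{\ell-1}.
\]
Here $F_R:=\sum_P\xx_P^R$ is a power-sum in the Laurent monomials. To use a Jacobian/Cartier identity with $\#\SS$ functions, I specialize each $\xx_P$ to depend on a single variable along the ray of $P$. Concretely, for $P=kP_\rho$ I look at coefficients of monomials supported on $\rho$ only. Any monomial $\xx_P^R$ with $R\notin\rho_P$ moves the exponent off the ray direction, and $\ell P$ lies on $\rho_P$. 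I then need the off-ray contributions to cancel in the total exponent. This is the main obstacle.

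To handle it, I would pick a linear functional $\phi$ with $\phi(P_\rho)>0$ and argue per ray with a Laurent-valuation/Cartier argument. Alternatively, I would apply the genuinely multivariable Lemma~\ref{lem:aux-cartier-jacobian} to the top form in the variables $\xx_P$ after a monomial change of coordinates, together with Laurent versions where $\mathcal C_\ell$ of exact forms vanishes. The Cartier operator then yields the constant term $\det(\partial F_R/\partial(\text{coords}))$ evaluated appropriately. This is block diagonal by rays, each block being $m_\rho!\,\mathcal V$ as in Lemma~\ref{lem:aux-ray-coefficient}, which is nonzero since $\ell>m_\rho$. Combining the blocks gives the congruence $1/((\ell-1)!)^{\#\SS}$. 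The key expected difficulty is justifying that the Jacobian of the mixed power-sums restricted to the required exponent is block diagonal by rays. If a direct argument fails, I would instead induct on the number of rays using a generic grading $\phi$ and a triangularity argument.
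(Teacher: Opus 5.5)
Your integrality argument is correct and is the same as the paper's: every multiplicity is at most $\ell-1$, so only $\ell$-adic unit factorials occur. The central step, however, is never proved. That step is to show that every term contributing to the coefficient of $\prod_R a_R^{\ell-1}$ has $\tau$ preserving each ray and $u_{P,R}=0$ unless $R$ lies on the ray of $P$. You list several strategies and flag this as the main obstacle, but none of them closes it as stated.
\begin{itemize}
\item A linear form $\phi$ only reproduces the row identities $\ell\phi(P)=\phi(\tau(P))+\sum_R u_{P,R}\phi(R)$. These sum to a tautology, exactly as the vector sums do. Moreover, $\phi$ cannot be positive on $\SS$ when $\SS$ contains opposite vectors, which the hypotheses allow and which can happen in the application when $0$ is interior to $\Delta$.
\item The substitution $a_R\mapsto t_\rho a_R$ is vacuous, because the extracted monomial already fixes the degree in each $a_R$.
\item The multivariable Cartier route does not fit Lemma~\ref{lem:aux-cartier-jacobian}. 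You have $n\cdot\#\SS$ variables $\xx_P$ but only $\#\SS$ functions $F_R$, so there is no square Jacobian. Specializing $\xx_P$ to the ray of $P$ presupposes the conclusion you need.
\end{itemize}
Also, $[\xx^{\ell P-Q}]E=[\xx^{\ell P}]\xx^{Q}E$, so your auxiliary determinant should be $\det(\xx_P^{Q})$, not $\det(\xx_P^{-Q})$.

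The missing idea is to replace linear bookkeeping by a strictly convex one, namely the Euclidean norm. Applied to each row $\ell P=\tau(P)+\sum_R u_{P,R}R$, it gives
\[
\ell\|P\|\le\|\tau(P)\|+\sum_R u_{P,R}\|R\|.
\]
Sum over $P$ and use the column constraints $\sum_P u_{P,R}=\ell-1$. The right-hand sides then total $\sum_P\|P\|+(\ell-1)\sum_R\|R\|=\ell\sum_P\|P\|$, so equality holds in every row. Equality in the triangle inequality forces $\tau(P)$ and every $R$ with $u_{P,R}>0$ onto the positive ray through $P$.

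Once this is established, the contributing data $(\tau,u)$ split as products over rays, and signs and inverse factorials are multiplicative. For each ray, write $a_j:=a_{jP_\rho}$ and $z=\xx^{P_\rho}$. The initial-segment hypothesis then identifies the on-ray block with $(h_{\ell i-j})_{1\le i,j\le m_\rho}$. Lemma~\ref{lem:aux-ray-coefficient} supplies the factor $((\ell-1)!)^{-m_\rho}$ directly, with no new Cartier or Jacobian computation.
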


\begin{proof}
Interpret the exponential as a formal series in the parameters $a_R$.
Each parameter coefficient is a scalar multiple of a Laurent monomial
in $\xx$. Hence $g_{P,Q}$ is well defined.

A term contributing to the coefficient in
\eqref{eq:aux-lattice-coefficient} consists of a permutation
$\tau$ of $\SS$ and nonnegative integers $u_{P,R}$ satisfying
\begin{equation}\label{eq:aux-lattice-assignment}
\ell P=\tau(P)+\sum_{R\in \SS}u_{P,R}R
\quad(P\in \SS)
\quad\text{and}\quad
\sum_{P\in \SS}u_{P,R}=\ell-1
\quad(R\in \SS).
\end{equation}
Its coefficient is
\[
\sgn(\tau)
\prod_{P,R\in \SS}\frac1{u_{P,R}!}.
\]
Every $u_{P,R}$ is at most $\ell-1$, which proves the asserted
$\ell$-integrality.

Write $\|\cdot\|$ for the Euclidean norm. For each row of
\eqref{eq:aux-lattice-assignment}, the triangle inequality gives
\[
\ell\|P\|\le
\|\tau(P)\|+\sum_{R\in \SS}u_{P,R}\|R\|.
\]
Summing over $P$ gives equality, since
\begin{equation*}
\sum_{P\in \SS}\left(
\|\tau(P)\|+\sum_{R\in \SS}u_{P,R}\|R\|
\right)=\sum_{P\in \SS}\|P\|+(\ell-1)\sum_{R\in \SS}\|R\|=\ell\sum_{P\in \SS}\|P\|.
\end{equation*}
Hence equality holds in every row. Equality in the Euclidean triangle
inequality implies that $\tau(P)$ and every $R$ with $u_{P,R}>0$
lie on the positive ray through $P$.

Therefore $\tau$ preserves each positive ray, and $u_{P,R}=0$ unless
$P$ and $R$ lie on the same positive ray. Ordering the rows and columns
by rays, we obtain the coefficient in
\eqref{eq:aux-lattice-coefficient} as the product of the corresponding
coefficients for $\SS\cap\rho$.
For one ray, write $a_j:=a_{jP_\rho}$. The resulting matrix is
$(h_{\ell i-j})_{1\le i,j\le m_\rho}$ from
Lemma~\ref{lem:aux-ray-coefficient}. That lemma gives a factor
$((\ell-1)!)^{-m_\rho}$ modulo $\ell$.
Multiplication over all rays proves
\eqref{eq:aux-lattice-coefficient}.
\end{proof}

The assertion also holds for $\SS=\varnothing$, with the empty
determinant equal to $1$.

\subsection{A nonzero rational coefficient}

Fix $0<t<1$, and put
\[
\TT:=\TT_{\le t},\quad
\SS:=\TT\setminus\{0\}
\quad\text{and}\quad
H:=\sum_{P\in \SS}w(P).
\]
Since $t<1$, we have $\#\SS<m_\Delta<\ell$ and
\[
0\le\lambda_v(P)\le w(P)<1
\qquad(P\in\SS,\ v\in V).
\]

For $P\in\SS$, $v\in V$ and an integer $j\ge0$, put
\[
\theta_v(P):=\{r\lambda_v(P)\}
\quad\text{and}\quad
b_{j,v}(P):=\lfloor j\lambda_v(P)\rfloor.
\]
For a variable $z$ and an integer $a\ge0$, write
\[
G_a(z):=\prod_{i=0}^{a-1}(z-i)
\quad\text{and}\quad
G_0(z):=1.
\]
For $P,Q\in\SS$ and
$\underline u=(u_R)_{R\in\SS}\in\ZZ_{\ge0}^{\SS}$, put
\[
k_v(P,Q,\underline u):=
\lambda_v(Q)+\sum_{R\in\SS}u_R\lambda_v(R)-\theta_v(P).
\]
Define the formal matrix $B^{(r)}$ by
\[
B^{(r)}_{P,Q}:=
\sum_{\substack{\underline u\in\ZZ_{\ge0}^{\SS}\\
k_v(P,Q,\underline u)\in\ZZ_{\ge0}\ (v\in V)}}
\left(\prod_{v\in V}G_{k_v(P,Q,\underline u)}\bigl(-\theta_v(P)\bigr)\right)
\prod_{R\in\SS}\frac{a_R^{u_R}}{u_R!}.
\]
The entries are formal power series in the parameters $a_R$.
If $k_v(P,Q,\underline u)$ is an integer, it is automatically
nonnegative because its defining expression is greater than $-1$.

For a nonzero term and $v\notin\sigma_P$, we have
$\lambda_v(P)=\theta_v(P)=0$.
Since $G_k(0)=0$ for $k>0$, we obtain
\[
0=k_v(P,Q,\underline u)
=\lambda_v(Q)+\sum_{R\in\SS}u_R\lambda_v(R).
\]
All coordinates are nonnegative. Hence $Q$ and every $R$ with
$u_R>0$ belong to $\sigma_P$. In particular,
$B^{(r)}_{P,Q}=0$ if $Q\notin\sigma_P$.

\begin{proposition}\label{prop:residual-nonzero}
The rational number
\[
c_r:=\left[\prod_{R\in \SS}a_R^{\ell-1}\right]\det B^{(r)}
\]
is nonzero. More precisely,
\[
c_r\equiv
\frac{\displaystyle\prod_{P\in \SS}\prod_{v\in V}b_{\ell,v}(P)!}
{((\ell-1)!)^{\#\SS}}\not\equiv0\pmod\ell.
\]
\end{proposition}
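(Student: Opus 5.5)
The plan is to reduce $c_r$ modulo $\ell$ to the coefficient already computed in Proposition~\ref{prop:aux-lattice-coefficient}. The key point is that, because $\ell\equiv r\pmod{D_\Sigma}$, the falling-factorial weights $G_k(-\theta_v(P))$ become honest falling factorials of the integers $b_{\ell,v}(P)$ modulo $\ell$.

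\textbf{Expanding the coefficient.} First I expand $\det B^{(r)}$ over permutations $\tau$ of $\SS$. For each row $P$ I choose a vector $\underline u_P=(u_{P,R})_R$. Extracting the coefficient of $\prod_R a_R^{\ell-1}$ forces
\[
\sum_P u_{P,R}=\ell-1 \qquad (R\in\SS).
\]
Thus every $u_{P,R}\le\ell-1$, and the factors $1/u_{P,R}!$ are $\ell$-integral. The values $G_k(-\theta_v(P))$ are polynomials in $\theta_v(P)\in D_\Sigma^{-1}\ZZ$. Since $\ell\nmid D_\Sigma$ (as $\ell\equiv r$ is a unit mod $D_\Sigma$), these values lie in $\ZZ_{(\ell)}$, so $c_r\in\ZZ_{(\ell)}$ and reduction modulo $\ell$ makes sense.

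\textbf{Reducing the weights.} Because $r\equiv\ell\pmod{D_\Sigma}$ and $D_\Sigma\lambda_v(P)\in\ZZ$, we have
\[
\theta_v(P)=\{\ell\lambda_v(P)\}=\ell\lambda_v(P)-b_{\ell,v}(P).
\]
Hence $-\theta_v(P)\equiv b_{\ell,v}(P)\pmod\ell$ in $\ZZ_{(\ell)}$. This gives
\[
G_k(-\theta_v(P))\equiv G_k\bigl(b_{\ell,v}(P)\bigr)=\frac{b_{\ell,v}(P)!}{(b_{\ell,v}(P)-k)!}\pmod\ell,
\]
which vanishes when $k>b_{\ell,v}(P)$. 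So modulo $\ell$ only terms with $k_v(P,\tau(P),\underline u_P)\le b_{\ell,v}(P)$ for all $P,v$ survive.

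\textbf{The summation identity.} For fixed $v$, summing $k_v$ over the rows gives
\[
\sum_P\lambda_v(\tau(P))+(\ell-1)\sum_R\lambda_v(R)-\sum_P\theta_v(P)=\sum_P\bigl(\ell\lambda_v(P)-\theta_v(P)\bigr)=\sum_P b_{\ell,v}(P).
\]
So the inequalities must all be equalities, $k_v=b_{\ell,v}(P)$, and each such term carries the common factor $\prod_{P,v}b_{\ell,v}(P)!$. Equality $k_v=b_{\ell,v}(P)$ means
\[
\lambda_v(\tau(P))+\sum_Ru_{P,R}\lambda_v(R)=\ell\lambda_v(P)\qquad\text{for all }v.
\]
Each nonzero term already has $\tau(P)$ and the $R$ with $u_{P,R}>0$ in $\sigma_P$, as observed before the statement. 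Hence this yields exactly
\[
\ell P=\tau(P)+\sum_Ru_{P,R}R.
\]
Conversely, any solution of \eqref{eq:aux-lattice-assignment} has all summands in $\sigma_P$ by the equality case of the argument in Proposition~\ref{prop:aux-lattice-coefficient} (they lie on the ray of $P$). Thus the surviving terms are precisely the terms of $[\prod_Ra_R^{\ell-1}]\det(g_{P,Q})$, each multiplied by $\prod_{P,v}b_{\ell,v}(P)!$.

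\textbf{Conclusion and main obstacle.} The set $\SS=\TT_{\le t}\setminus\{0\}$ meets each ray in an initial segment $\{P_\rho,\dots,m_\rho P_\rho\}$, since $w$ is linear on rays. Also $m_\rho\le\#\SS<\ell$. So Proposition~\ref{prop:aux-lattice-coefficient} applies and gives the stated congruence. Finally $b_{\ell,v}(P)<\ell$, because $\lambda_v(P)<1$, so the numerator is an $\ell$-adic unit and $c_r\ne0$. I expect the delicate step to be the bookkeeping in the summation identity: checking that $\theta_v$ is computed with $r$ yet equals $\{\ell\lambda_v\}$, and that no term with some $k_v>b_{\ell,v}(P)$ sneaks in nonzero modulo $\ell$.
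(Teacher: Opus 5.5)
Your proposal is correct and follows essentially the same route as the paper. You reduce $G_k(-\theta_v(P))$ to $G_k(b_{\ell,v}(P))$ modulo $\ell$, and use the row-sum identity to force $k_v=b_{\ell,v}(P)$ in every surviving term. Via $\sigma_P$-membership and the ray argument, you then match these terms with those of Proposition~\ref{prop:aux-lattice-coefficient} and conclude from it, while spelling out a few points the paper leaves implicit, such as $\theta_v(P)=\{\ell\lambda_v(P)\}$ and why $\SS$ meets each ray in an initial segment.
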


\begin{proof}
In this coefficient every $u_R$ is at most $\ell-1$. The denominators of the coordinates divide $D_\Sigma$, and $\ell\nmid D_\Sigma$. Thus $c_r$ is integral at $\ell$.

Since $\ell\equiv r\pmod{D_\Sigma}$, for every integer $k\ge0$ we have
\[
b_{\ell,v}(P)+\theta_v(P)=\ell\lambda_v(P)
\quad\text{and}\quad
G_k\bigl(-\theta_v(P)\bigr)\equiv
G_k\bigl(b_{\ell,v}(P)\bigr)\pmod\ell.
\]
Moreover, $0\le b_{\ell,v}(P)<\ell$. A term is nonzero modulo $\ell$ only if
\[
k_v(P,\tau(P),\underline u_P)\le b_{\ell,v}(P)
\qquad(P\in \SS,\ v\in V).
\]
The total exponent of each parameter $a_R$ is $\ell-1$.
Therefore, for each fixed $v$, we have
\[
\begin{aligned}
\sum_{P\in \SS}k_v(P,\tau(P),\underline u_P)
&=\sum_{P\in \SS}\lambda_v(\tau(P))
+\sum_{R\in \SS}(\ell-1)\lambda_v(R)
-\sum_{P\in \SS}\theta_v(P)\\
&=\sum_{P\in \SS}b_{\ell,v}(P).
\end{aligned}
\]
All the preceding inequalities are therefore equalities. Every term
that is nonzero modulo $\ell$ satisfies
\[
\ell P=\tau(P)+\sum_{R\in\SS}u_{P,R}R,
\]
and each row contributes the factor $\prod_{v\in V}b_{\ell,v}(P)!$.
On each positive ray, $\SS$ is an initial segment of the lattice points.
Conversely, the proof of Proposition~\ref{prop:aux-lattice-coefficient}
shows that, in every term contributing to
\eqref{eq:aux-lattice-coefficient}, $\tau(P)$ and all $R$ with
$u_{P,R}>0$ lie on the positive ray through $P$.
The coordinate equations on that ray give
$k_v(P,\tau(P),\underline u_P)=b_{\ell,v}(P)$ for every $v\in V$.
Thus Proposition~\ref{prop:aux-lattice-coefficient} gives the stated
congruence after multiplication by
$\prod_{P\in\SS}\prod_{v\in V}b_{\ell,v}(P)!$.
Every factorial in the numerator and denominator is nonzero modulo $\ell$.
\end{proof}

\section{Proofs of Theorem~\ref{thm:main-L} and Corollary~\ref{cor:interior}}
\label{sec:main-proof}

By Proposition~\ref{prop:dwork-reduction}, to prove
Theorem~\ref{thm:main-L} it is enough to find constants $C_\Delta$
and $p_0(\Delta)$ such that, for every prime $p>p_0(\Delta)$, one
nondegenerate polynomial $f_p$ over $\FF_{p^2}$ satisfies
\eqref{eq:required-minors} for all $1\le k\le K$.

\subsection{A coefficient of a principal minor}
\label{sec:principal-minor}

For a fixed $0<t<1$, put
\[
\TT:=\TT_{\le t},\quad \SS:=\TT\setminus\{0\}
\quad\text{and}\quad H:=\sum_{P\in\SS}w(P).
\]
Since $w(R)<1=w(v)$ for $R\in\SS$ and $v\in V$, we have
$\SS\cap V=\varnothing$.
Consider the Laurent polynomial
\[
f(\xx):=\sum_{v\in V}a_v\xx^v
+\sum_{R\in\SS}a_R\xx^R,
\]
where the coefficients $a_R$ for $R\in\SS\cup V$ are independent parameters.
Let $p>\ell$ be a prime with $p\equiv r\pmod{D_\Sigma}$. Put
\[
\Lambda_v:=\sum_{P\in \SS}\lambda_v(P)
\quad\text{and}\quad
\mathcal M_p:=\prod_{R\in \SS}a_R^{\ell-1}
             \prod_{v\in V}a_v^{(p-\ell)\Lambda_v}.
\]
Since $D_\Sigma\Lambda_v\in\ZZ$ and $D_\Sigma\mid(p-\ell)$,
the vertex exponents are nonnegative integers.
The total degree of $\mathcal M_p$ is
\[
\nu_p:=(p-\ell)H+(\ell-1)\#\SS
=(p-1)H+(\ell-1)(\#\SS-H).
\]
For this polynomial, put
\[
\widetilde N^{(p)}_{P,Q}:=
[\xx^{pP-Q}]
\prod_{v\in V}E_p(\pi a_v\xx^v)
\prod_{R\in \SS}E_p(\pi a_R\xx^R)
\qquad(P,Q\in \TT).
\]
We regard these entries as formal power series in $\pi$ and the parameters.

\begin{proposition}\label{prop:actual-coefficient}
Put
\[
U_p:=\prod_{P\in \SS}\prod_{v\in V}b_{p,v}(P)!.
\]
Then $U_p\in\ZZ_p^\times$ and
\[
U_p[\pi^{\nu_p}\mathcal M_p]
\det(\widetilde N^{(p)}_{P,Q})_{P,Q\in \TT}
\equiv c_r\pmod p.
\]
Consequently, for all sufficiently large primes $p\equiv r\pmod{D_\Sigma}$,
this coefficient is a $p$-adic unit.
\end{proposition}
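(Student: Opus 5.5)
The plan is to expand the principal minor into a sum over permutations, use the convex function $\psi$ of Lemma~\ref{lem:cone-weights} to pin down which terms can contribute to the coefficient of $\pi^{\nu_p}\mathcal M_p$, and then match the surviving terms, modulo $p$, with the terms of $\det B^{(r)}$.

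\emph{Step 1: reduce to parameter degree.} Every parameter $a_R$ with $R\in\SS\cup V$ enters only through $E_p(\pi a_R\xx^R)$, so it always carries exactly one factor of $\pi$. Hence the $\pi$-exponent of any monomial equals its total parameter degree. The condition $\pi^{\nu_p}$ is therefore automatic once we take the coefficient of $\mathcal M_p$.

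\emph{Step 2: expand and bound with $\psi$.} Expand $\det(\widetilde N^{(p)})_{\TT,\TT}$ as a sum over permutations $\tau$ of $\TT$ and exponents $u_{P,R}$ ($R\in\SS$) and $u_{P,v}$ ($v\in V$) satisfying
\[
pP-\tau(P)=\sum_{R\in\SS}u_{P,R}R+\sum_{v\in V}u_{P,v}v ,
\]
with column sums $\ell-1$ for each $R$ and $(p-\ell)\Lambda_v$ for each $v$. Apply $\psi$ to each row. By \eqref{eq:psi-properties},
\[
p\psi(P)\le\psi(\tau(P))+\sum_R u_{P,R}\psi(R)+\sum_v u_{P,v}\psi(v).
\]
Sum over $P$ and use \eqref{eq:psi-definition}, namely $\psi(P)=\sum_v\lambda_v(P)\psi(v)$. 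The right side totals
\[
\sum_P\psi(P)+(\ell-1)\sum_R\psi(R)+(p-\ell)\sum_v\Lambda_v\psi(v)=p\sum_P\psi(P).
\]
So equality holds in every row, and \eqref{eq:cone-equality} applies:
\begin{enumerate}[leftmargin=*]
\item For $P=0$, positivity of $\psi$ forces $\tau(0)=0$ and all $u_{0,\cdot}=0$. The entry there is $1$.
\item For $P\in\SS$, the point $\tau(P)$, every $R$ with $u_{P,R}>0$, and every $v$ with $u_{P,v}>0$ lie in $\sigma_P$.
\end{enumerate}
Linearity of the coordinates on $\sigma_P$ then gives
\[
u_{P,v}=p\lambda_v(P)-\lambda_v(\tau P)-\sum_R u_{P,R}\lambda_v(R)=b_{p,v}(P)-k_v\bigl(P,\tau(P),\underline u_P\bigr),
\]
using $p\lambda_v(P)=b_{p,v}(P)+\theta_v(P)$, which holds because $p\equiv r\pmod{D_\Sigma}$.

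\emph{Step 3: identify coefficients.} Since $u_{P,v}\le p\lambda_v(P)<p$ and $u_{P,R}\le\ell-1<p$, only the Artin--Hasse coefficients $1/j!$ with $j<p$ occur. The term's coefficient is therefore $\sgn(\tau)\prod 1/u!$. For the vertex factors,
\[
\frac{b!}{(b-k)!}=G_k(b)\equiv G_k(-\theta)\pmod p,
\]
where $b=b_{p,v}(P)$ and $k=k_v$. This holds because $b\equiv-\theta_v(P)$ $p$-adically, as $\theta_v(P)$ has denominator prime to $p$. Hence $U_p$ times each term is congruent modulo $p$ to the corresponding term of $\det B^{(r)}$ restricted to $\SS$.

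Conversely, take a term of $[\prod a_R^{\ell-1}]\det B^{(r)}$. Set $u_{P,v}:=b_{p,v}(P)-k_v$. If $k_v>b_{p,v}(P)$, then $G_{k_v}(b)=0$ since $0\le b<p$, so that term vanishes modulo $p$. The vertex column sums come out to $(p-\ell)\Lambda_v$ automatically by summing the coordinate identity. Thus the two expansions correspond term by term modulo $p$. All the rationals involved are $p$-integral, because $p>\ell$ and $p\nmid D_\Sigma$; this gives the congruence with $c_r$.

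\emph{Step 4: conclude.} Finally, $U_p$ is a unit because each $b_{p,v}(P)<p$. By Proposition~\ref{prop:residual-nonzero}, $c_r$ is a fixed nonzero rational, independent of $p$ in the class $r$. So $p\nmid c_r$ for all large $p$, which yields the unit claim.

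\emph{Main obstacle.} The delicate step is the equality analysis in Step 2, in particular the row $P=0$ when $\Cone(\Delta)$ may contain lines. There the argument relies on $\psi>0$ together with subadditivity, rather than on $w$. I would also double-check that the bijection of terms respects the signs and the restriction to $Q\in\sigma_P$ built into $B^{(r)}$.
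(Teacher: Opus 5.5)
Your proposal is correct and follows the paper's proof step for step: you sum the $\psi$-subadditivity over the rows to force equality in each row, Lemma~\ref{lem:cone-weights}(3) then puts $\tau(P)$, the $R$ and the $v$ in $\sigma_P$ and trivializes row $0$, the vertex exponents become $b_{p,v}(P)-k_v$, and the identity $b!/e!=G_k(b)\equiv G_k(-\theta)\pmod p$ matches the terms with those of $c_r$, while terms with a negative exponent vanish mod $p$. The point you flagged for the converse direction is exactly the remark preceding Proposition~\ref{prop:residual-nonzero}: every nonzero term of $B^{(r)}$ already has $\tau(P)$ and all $R$ with $u_{P,R}>0$ in $\sigma_P$, which is what makes the row equations hold, and $\tau(0)=0$ makes the signs agree.
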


\begin{proof}
A term contributing to this coefficient determines a permutation $\tau$
of $\TT$ and nonnegative integers $u_{P,R}$ and $e_{P,v}$ satisfying
\[
pP=\tau(P)+\sum_{R\in \SS}u_{P,R}R+\sum_{v\in V}e_{P,v}v,
\]
\[
\sum_{P\in \TT}u_{P,R}=\ell-1
\quad\text{and}\quad
\sum_{P\in \TT}e_{P,v}=(p-\ell)\Lambda_v.
\]
By the subadditivity of $\psi$, we have
\[
p\psi(P)\le\psi(\tau(P))
+\sum_{R\in\SS}u_{P,R}\psi(R)
+\sum_{v\in V}e_{P,v}\psi(v).
\]
Summing over $P\in\TT$ gives equality, since
\[
\begin{aligned}
&\sum_{P\in \TT}\psi(\tau(P))
+\sum_{P\in \TT}\sum_{R\in \SS}u_{P,R}\psi(R)
+\sum_{P\in \TT}\sum_{v\in V}e_{P,v}\psi(v)\\
&\qquad=\ell\sum_{P\in \SS}\psi(P)
+(p-\ell)\sum_{v\in V}\Lambda_v\psi(v)
=p\sum_{P\in \TT}\psi(P).
\end{aligned}
\]
Equality holds in every row. By Lemma~\ref{lem:cone-weights}(3),
$\tau(P)$, every $R$ with $u_{P,R}>0$ and every $v$ with $e_{P,v}>0$
belong to $\sigma_P$. In the row indexed by $0$, positivity of $\psi$ away from $0$ gives
\[
\tau(0)=0,\quad
u_{0,R}=0
\quad\text{and}\quad
e_{0,v}=0.
\]
Thus the row indexed by $0$ contributes $1$ to the coefficient of
$\pi^{\nu_p}\mathcal M_p$, although its off-diagonal entries may be
nonzero when $0$ is interior to $\Delta$.

For a nonzero row, additivity of the coordinates inside $\sigma_P$ gives
\[
e_{P,v}
=p\lambda_v(P)-\lambda_v(\tau(P))
-\sum_{R\in \SS}u_{P,R}\lambda_v(R).
\]
In particular,
\[
0\le e_{P,v}\le p\lambda_v(P)<p
\quad\text{and}\quad
0\le u_{P,R}\le\ell-1<p.
\]
The coefficients of $E_p(z)$ in degrees less than $p$ are the ordinary inverse factorials. Hence these row contributions are
\[
\prod_{v\in V}\frac{1}{e_{P,v}!}
\prod_{R\in \SS}\frac{1}{u_{P,R}!}.
\]
By the definition of $b_{p,v}(P)$,
\[
e_{P,v}=b_{p,v}(P)-k_v(P,\tau(P),\underline u_P),
\]
and multiplication by the row factorial gives
\[
\frac{b_{p,v}(P)!}{e_{P,v}!}
=G_{k_v(P,\tau(P),\underline u_P)}\bigl(b_{p,v}(P)\bigr)
\equiv G_{k_v(P,\tau(P),\underline u_P)}\bigl(-\theta_v(P)\bigr)\pmod p.
\]

Conversely, consider a nonzero term contributing to $c_r$.
The definition of $B^{(r)}$ implies that $\tau(P)$ and every $R$
with $u_{P,R}>0$ belong to $\sigma_P$. If the integers
\[
e_{P,v}:=b_{p,v}(P)-k_v(P,\tau(P),\underline u_P)
\]
are nonnegative, the row vector equations hold. Summing their coordinate equations over $P$ gives
\[
\sum_{P\in \SS}e_{P,v}=(p-\ell)\Lambda_v,
\]
as required. If $e_{P,v}<0$ for some $P$ and $v$, then $k_v>b_{p,v}(P)$ and
\[
G_{k_v}\bigl(b_{p,v}(P)\bigr)=0.
\]
The corresponding term of $c_r$ is therefore zero modulo $p$.
Summing over all terms proves the congruence.

Finally, $c_r$ is a fixed nonzero rational number by Proposition~\ref{prop:residual-nonzero}. Only finitely many primes divide its numerator or denominator. Also $b_{p,v}(P)<p$, which proves $U_p\in\ZZ_p^\times$.
\end{proof}

\subsection{Completion of the proofs}
\label{sec:cutoffs}

\begin{proof}[Proof of Theorem~\ref{thm:main-L}]
Use the weights $\rho_k$ and the numbers $H_k$ defined before
Proposition~\ref{prop:dwork-reduction}. Put
\[
 \SS_k:=\TT_{\le\rho_k}\setminus\{0\}\qquad(0\le k\le K).
\]
If $K=0$, take $C_\Delta:=0$. The argument over a finite field below,
with the product over $k$ empty, gives a nondegenerate polynomial
over $\FF_{p^2}$ for all sufficiently large $p$.
Proposition~\ref{prop:dwork-reduction} then proves the theorem.
We may therefore assume that $K\ge1$.
Since $\TT_{\le\rho_K}=\SS_K\cup\{0\}=\TT_{<1}$, the last principal
minor has order $m_\Delta$.

Use the fan $\Sigma$ and the integer $D_\Sigma$ fixed in
Section~\ref{sec:preliminaries}. For each unit residue class $r$
modulo $D_\Sigma$, choose an auxiliary prime $\ell_r$ as in
Section~\ref{sec:auxiliary-prime}.
Since $\#\SS_k<m_\Delta<\ell_r$, the same prime applies to every $k$.
For each $r$ and $1\le k\le K$, apply Proposition~\ref{prop:residual-nonzero}
with $\SS=\SS_k$ and $\ell=\ell_r$ to obtain a fixed nonzero rational
coefficient $c_{r,k}$. Define
\begin{equation}\label{eq:uniform-constant}
 C_\Delta:=\max_r(\ell_r-1)
             \sum_{P\in \SS_K}(1-w(P)).
\end{equation}
Choose $p_0(\Delta)$ to exclude primes dividing $D_\Sigma$, primes not
exceeding any $\ell_r$ and the finitely many primes dividing the
numerator or denominator of one of the $c_{r,k}$.

Fix a remaining prime $p\equiv r\pmod{D_\Sigma}$. Work in the full coefficient
space with one parameter $a_R$ for every nonzero
$R\in\Delta\cap\ZZ^n$. We may set the constant coefficient to zero,
since adding a constant does not change Newton slopes.
Let $\mathcal H_{p,k}$ be the coefficient of
\[
 \pi^{\nu_{p,k}}
\quad\text{where}\quad
\nu_{p,k}:=(p-\ell_r)H_k+(\ell_r-1)\#\SS_k,
\]
in the universal principal minor indexed by $\TT_{\le\rho_k}$, reduced modulo $p$.
The number $\nu_{p,k}$ is an integer because
$D_\Sigma H_k\in\ZZ$ and $D_\Sigma\mid(p-\ell_r)$.
The polynomial $\mathcal H_{p,k}$ has total degree at most $\nu_{p,k}$, and
\[
0\le\nu_{p,k}-(p-1)H_k
=(\ell_r-1)\sum_{P\in\SS_k}(1-w(P))\le C_\Delta.
\]
Proposition~\ref{prop:actual-coefficient} proves that
$\mathcal H_{p,k}\ne0$: setting all parameters outside $\SS_k$ and the
vertices to zero leaves the nonzero coefficient calculated there.

Choose $\mathcal R\in\ZZ[\aa]$ defining a nonempty principal open
subset of the nondegenerate locus over $\QQ$ \cite{AS89,LW09}.
Clearing denominators in the Nullstellensatz identities for the face
equations on this subset shows that $\mathcal R(\aa)\ne0$ implies nondegeneracy
for all but finitely many primes.
The polynomial
\[
 \overline{\mathcal R}\,
 \prod_{v\in\Vertices(\Delta)\setminus\{0\}}a_v
 \prod_{k=1}^K\mathcal H_{p,k}
\]
is nonzero over $\FF_p$ and has total degree $O_\Delta(p)$.
For sufficiently large $p$ this degree is less than $p^2$.
A nonzero polynomial of total degree smaller than the cardinality of
a finite field cannot vanish at every point of its affine space.
We can therefore choose one coefficient vector over $\FF_{p^2}$ at
which the displayed product is nonzero. It defines a nondegenerate
polynomial $f_p$ with Newton polytope $\Delta$.

For a determinant term of the principal minor indexed by $\TT_{\le\rho_k}$, the
total formal degree in $\pi$ is at least
\[
 \sum_{P\in \TT_{\le\rho_k}}w(pP-\tau(P))\ge(p-1)H_k.
\]
Thus the principal minor has no formal terms of degree less than
$\lceil(p-1)H_k\rceil$. At the Teichm\"uller lift of the chosen
coefficient vector, the formal coefficient at degree $\nu_{p,k}$
reduces modulo $p$ to the nonzero value $\mathcal H_{p,k}(\aa)$.
It is therefore a unit in the unramified coefficient ring.
For $p-1>C_\Delta$, we have
\[
0\le\nu_{p,k}-\lceil(p-1)H_k\rceil\le C_\Delta<p-1.
\]
Lemma~\ref{lem:formal-specialization} therefore gives
\[
 v_\pi\det N_{\TT_{\le\rho_k},\TT_{\le\rho_k}}\le\nu_{p,k}
 \le(p-1)H_k+C_\Delta.
\]

Increase $p_0(\Delta)$ to ensure $p-1>2DC_\Delta$.
The polynomial $f_p$ satisfies \eqref{eq:required-minors}
for every $1\le k\le K$ with the same constant $C_\Delta$.
Proposition~\ref{prop:dwork-reduction} proves
\eqref{eq:quantitative-L-height} and \eqref{eq:quantitative-L-slope},
as well as the asserted height estimate for $f_p$.
\end{proof}

\begin{proof}[Proof of Corollary~\ref{cor:interior}]
Since $0$ is interior to $\Delta$, \cite[Theorem~1.3(c)]{DL91} gives
purity of weight $n$ for all reciprocal roots of $L_f^*(s)^{(-1)^{n-1}}$.
For a complex embedding and a reciprocal root $\gamma$, purity gives
$\overline\gamma=q^n/\gamma$.
The coefficients of this polynomial belong to $\QQ(\zeta_p)$, in which $p$ has a
unique prime. Complex conjugation preserves the valuation of each
coefficient. Hence this polynomial and its conjugate have the same Newton
polygon, and their slope multiset is invariant under
$\alpha\mapsto n-\alpha$.

The generic Newton slopes have the same symmetry. Applying this
symmetry at an ordinary prime supplied by \cite[Theorem~1.6]{Wan04}
gives the corresponding identity for the Hodge slopes. Thus
\[
\alpha_{p,i}+\alpha_{p,V_\Delta+1-i}=n
\quad\text{and}\quad
\beta_i+\beta_{V_\Delta+1-i}=n.
\]
Hence
Theorem~\ref{thm:main-L} gives \eqref{eq:quantitative-L-slope}
for the slopes corresponding to Hodge slopes greater than $n-1$.

Suppose now that $n=2$.
The remaining Hodge slopes are all equal to one.
Both Newton and Hodge slope sums are $V_\Delta$.
For $e_p(x):=\GNP_{L,p}(\Delta)(x)-\HP_L(\Delta)(x)$,
symmetry gives
\[
e_p(V_\Delta-x)=e_p(x)\qquad(0\le x\le V_\Delta).
\]
By Theorem~\ref{thm:main-L},
$0\le e_p(x)\le C_\Delta/(p-1)$ on $[0,m_\Delta]$ and $[V_\Delta-m_\Delta,V_\Delta]$.
The Hodge polygon is linear on $[m_\Delta,V_\Delta-m_\Delta]$. Convexity of the Newton
polygon therefore gives
\[
0\le e_p(x)\le e_p(m_\Delta)\le\frac{C_\Delta}{p-1}
\qquad(m_\Delta\le x\le V_\Delta-m_\Delta).
\]
This proves \eqref{eq:quantitative-L-height} on $[0,V_\Delta]$.
Since $\alpha_{p,i}-\beta_i=e_p(i)-e_p(i-1)$,
\eqref{eq:quantitative-L-slope} holds for every $1\le i\le V_\Delta$.
Letting $p\to\infty$ proves the uniform convergence.
\end{proof}

\begin{remark}\label{rem:cutoff-factor}
If $m_\Delta<V_\Delta$, the Hodge lower bound at $m_\Delta+1$
and \eqref{eq:quantitative-L-height} give
$\alpha_{p,m_\Delta+1}\ge1-C_\Delta/(p-1)$.
Thus, for any fixed $c$ with $\beta_{m_\Delta}<c<1$, exactly
$m_\Delta$ generic slopes are below $c$ for sufficiently large $p$.
\end{remark}

\begin{remark}
In dimension one, Theorem~\ref{thm:main-L} gives convergence of the whole
Newton polygon for every integral interval containing zero. If zero is
an endpoint, all finite Hodge slopes are smaller than one. If zero
is interior, the only missing slope is the final slope one, which is
determined by the common endpoint. The polynomial case was proved
in \cite{Zhu03,Zhu04}. For the Laurent polynomial case, see
\cite[Section~3]{Blache11}.
\end{remark}

\section{Examples without the semigroup hypothesis}
\label{sec:obstruction}

We begin with a simplex whose lattice-point semigroup has infinite
complement and whose Hodge polygon has slope $1/2$.

\begin{example}\label{ex:nontrivial-low-slopes}
Put
\[
 v:=(1,1,3)
\quad\text{and}\quad
\Delta:=\conv\{0,2e_1,e_2,v\}\subset\RR^3.
\]
The coordinates of $P=(P_1,P_2,P_3)$ with respect to $2e_1,e_2$ and $v$ are
\[
\lambda_{2e_1}(P)=\frac{P_1}2-\frac{P_3}6,\quad
\lambda_{e_2}(P)=P_2-\frac{P_3}3
\quad\text{and}\quad
\lambda_v(P)=\frac{P_3}3.
\]
Requiring these coordinates to lie in $[0,1)$ shows that the lattice points in the
half-open fundamental parallelepiped are
\[
 0,\quad e_1,\quad (1,1,1),\quad (2,1,1),\quad (1,1,2)
\quad\text{and}\quad (2,1,2).
\]
Their weights are $0$, $1/2$, $4/3$, $11/6$, $7/6$ and $5/3$,
respectively. Every point of $\MM(\Delta)$ is uniquely a sum of one
of these points and a nonnegative integral combination of
$2e_1,e_2$ and $v$. Hence
\[
 \sum_{P\in\MM(\Delta)}z^{w(P)}
 =\frac{1+z^{1/2}+z^{7/6}+z^{4/3}+z^{5/3}+z^{11/6}}{(1-z)^3}.
\]
It follows that the finite Hodge slopes are
\[
 0,\quad\frac12,\quad\frac76,\quad\frac43,\quad\frac53
\quad\text{and}\quad\frac{11}{6}.
\]
The same calculation gives
\[
 \Delta\cap\ZZ^3=\{0,e_1,2e_1,e_2,v\}.
\]
Every point in the semigroup generated by this set has third
coordinate divisible by three. However,
$(1,1,1)+2m e_1$ belongs to $\MM(\Delta)$ for every integer $m\ge0$.
Thus the semigroup has infinite complement in $\MM(\Delta)$.

Here $\TT_{<1}=\{0,e_1\}$ and $m_\Delta=2$.
Theorem~\ref{thm:main-L} gives a uniform error of order $p^{-1}$
on $[0,2]$. In particular,
\[
 \lim_{p\to\infty}\alpha_{p,1}=0
\quad\text{and}\quad
\lim_{p\to\infty}\alpha_{p,2}=\frac12.
\]
\end{example}

The following family shows that convergence of the full Newton polygon
can fail in every dimension $n\ge3$. Compare Wan's four-dimensional
example in \cite[Examples~1.11]{Zhu12}.

\begin{proposition}\label{prop:counterexample}
For $n\ge3$, put
\[
 v:=(1,\ldots,1,3)\in\ZZ^n
\quad\text{and}\quad
\Delta_n:=\conv\{0,e_1,\ldots,e_{n-1},v\}.
\]
The finite Hodge slopes are
\[
 0,\quad
\frac{n+1}{3}
\quad\text{and}\quad
\frac{2n-1}{3}.
\]
For every prime $p\equiv2\pmod3$, all nondegenerate polynomials with
this Newton polytope have finite Newton slopes
\[
 0,\quad
\frac n2
\quad\text{and}\quad
\frac n2.
\]
Therefore the generic Newton polygon does not converge to the Hodge
polygon as $p$ tends to infinity through all primes.
\end{proposition}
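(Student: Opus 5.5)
We treat $\Delta_n$ as a simplex and compute everything explicitly. Its vertices other than $0$ are $e_1,\ldots,e_{n-1},v$, with determinant $3$. Barycentric coordinates give
\[
\lambda_{e_i}(P)=P_i-\tfrac{P_n}{3}\quad(i<n),\qquad \lambda_v(P)=\tfrac{P_n}{3},
\]
and hence $w(P)=\sum_{i<n}P_i-\tfrac{(n-2)P_n}{3}$. The half-open fundamental parallelepiped contains exactly the lattice points $0$, $u_1:=(1,\ldots,1,1)$ and $u_2:=(1,\ldots,1,2)$. Their weights are $(n-1)-(n-2)/3=(2n-1)/3$ and $(n-1)-2(n-2)/3=(n+1)/3$. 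As in Example~\ref{ex:nontrivial-low-slopes}, the weight generating function is $(1+z^{(n+1)/3}+z^{(2n-1)/3})/(1-z)^n$. Since $V_{\Delta_n}=3$, the Hodge slopes are $0,(n+1)/3,(2n-1)/3$.

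For the Newton side we note that $\Delta_n\cap\ZZ^n=\{0,e_1,\ldots,e_{n-1},v\}$, so up to a constant and scaling of variables every nondegenerate $f$ has the form $f=\sum_{i<n}a_ix_i+a_v\xx^v$ with all coefficients nonzero. We substitute $x_i\mapsto x_i/a_i$ and absorb the constant, reducing to $f=x_1+\cdots+x_{n-1}+c\,x_1\cdots x_{n-1}x_n^3$. Summing over $x_1,\ldots,x_{n-1}$ is awkward, so we use the monomial change of variables instead. The lattice map $\ZZ^n\to\ZZ^n$ sending $e_i\mapsto e_i$ and $e_n\mapsto v$ has index $3$. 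Since $p\equiv2\pmod 3$ and $q=p^a$, the factor $3$ has to be handled: $x\mapsto x^3$ on $\FF_{q^k}^\times$ is bijective when $q^k\equiv2\pmod3$ and $3$-to-$1$ onto the cubes otherwise. It is cleanest to compute the sum directly instead. Summing over $x_n$ first, $\sum_{x_n}\chi(\Tr(c\,y\,x_n^3))$ with $y=x_1\cdots x_{n-1}$ is a cubic Gauss-type sum.

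The cleaner route is to use the theorem that $L^*$ for a diagonal simplex is a product of Gauss sums. Through Fourier expansion in multiplicative characters, $S_k^*$ equals a sum over characters $\omega$ with $\omega^3=1$ of products of $n$ Gauss sums, namely $g(\omega^{-1})^{n-1}g(\omega^{\pm})$ up to a character value at $c$. The trivial $\omega$ gives the unit root, slope $0$. The two nontrivial cubic characters exist only over $\FF_{q^k}$ with $3\mid q^k-1$, which forces even $ak$ when $p\equiv2\pmod3$. Over $\FF_{p^2}$, Stickelberger's theorem gives cubic Gauss sums with $v_q=1/2$ each; for $p\equiv2\pmod3$ they are pure, equal to $\pm p$. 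The reciprocal roots are therefore products of $n$ Gauss sums of valuation $1/2$, so each nontrivial root has slope exactly $n/2$. These slopes are independent of $c$, which gives the claim for all nondegenerate $f$. Two corrections need care: over the base field with $a$ odd the two roots are conjugate square roots, contributing a factor of degree two, and we must compute the sign carefully via Davenport--Hasse.

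Finally, for $p\equiv2\pmod3$ the second Newton slope is $n/2\neq(n+1)/3$ whenever $n\ne2$, so the Newton polygon stays a fixed distance from the Hodge polygon. Since there are infinitely many such primes, the generic Newton polygon cannot converge. The main obstacle is making the Gauss-sum factorization of $L^*$ rigorous, including the behaviour over base fields where cube roots of unity are absent. I would first try Stickelberger together with the explicit identity $g(\omega)=\pm p$ for $p\equiv 2\pmod 3$ over $\FF_{p^2}$, which gives the slopes $n/2$ outright.
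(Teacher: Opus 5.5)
Your proposal is correct and is essentially the paper's argument. The paper cites Wan's diagonal Gauss-sum formula: each slope is the average weight over an orbit of multiplication by $p$ on the fundamental parallelepiped, and $p$ swaps $u_1$ and $u_2$ when $p\equiv2\pmod3$. You carry out the same character expansion by hand, where the swap appears as $\omega^p=\bar\omega$; together with $g(\omega)g(\bar\omega)=p^2$ this forces $g(\omega)=\pm p$ over $\FF_{p^2}$, so you do not actually need Stickelberger.

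One step is left unjustified, $\Delta_n\cap\ZZ^n=\{0,e_1,\ldots,e_{n-1},v\}$, but it follows from your generating function, since both nonzero parallelepiped weights exceed one when $n\ge3$. The Davenport--Hasse signs you flag do not affect the valuations.
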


\begin{proof}
The nonzero vertex matrix has determinant $3$. The half-open
fundamental parallelepiped has three lattice points, whose vertex
coordinates are
\[
 (0,\ldots,0),\quad
(2/3,\ldots,2/3,1/3)
\quad\text{and}\quad
(1/3,\ldots,1/3,2/3).
\]
Their weights are $0$, $(2n-1)/3$ and $(n+1)/3$.
Every cone lattice point has a unique representation as one of these
three points plus a nonnegative integral combination of the vertices.
It follows that
\[
 \sum_{P\in \MM(\Delta_n)}z^{w(P)}
 =\frac{1+z^{(n+1)/3}+z^{(2n-1)/3}}{(1-z)^n}.
\]
This proves the formula for the Hodge slopes. Since both nonzero weights in the
numerator are greater than one for $n\ge3$, the only lattice points
in $\Delta_n$ are its vertices and the origin.

A constant term multiplies the reciprocal roots by a root of unity
and does not change their valuations. After omitting it, every
polynomial in this family is diagonal:
\[
 f=a_1x_1+\cdots+a_{n-1}x_{n-1}
       +a_nx_1\cdots x_{n-1}x_n^3
\quad\text{with}\quad
a_1\cdots a_n\ne0.
\]
For $p\ne3$, linear independence of every subset of the vertex
columns over $\FF_p$ shows nondegeneracy on every face not containing
zero.

For a diagonal polynomial, the Gauss-sum formula computes each
Newton slope as the average weight on the corresponding orbit of
multiplication by $p$ on the fundamental parallelepiped
\cite[Corollary~2.3 and equation~(20)]{Wan04}. After a finite extension, a diagonal change of variables makes every
nonzero coefficient equal to one without changing normalized slopes.
When $p\equiv2\pmod3$,
multiplication by $p$ interchanges the two nonzero coordinate vectors.
Their average weight is
\[
 \frac12\left(\frac{n+1}{3}+\frac{2n-1}{3}\right)=\frac n2.
\]
The zero vector contributes slope zero. When $p\equiv1\pmod3$, all
three coordinate vectors are fixed and the Newton polygon equals the
Hodge polygon. Both residue classes contain infinitely many primes
and give different Newton polygons.
\end{proof}

For $n=3$, the height difference at $x=2$ for primes
$p\equiv2\pmod3$ is
\[
 \frac32-\frac43=\frac16.
\]
Every point in the semigroup generated by $\Delta_n\cap\ZZ^n$ has
last coordinate divisible by three, whereas infinitely many lattice
points in the cone do not. Thus this family does not satisfy the
finite-complement hypothesis in Wan's conjecture.

\begin{remark}
For $n=4$ and $p\equiv2\pmod3$, the characteristic-series polygons agree at
$x=m_\Delta=1$ and differ at $x=6$. Indeed,
\[
 \TT_{<1}=\{0\}
\quad\text{and}\quad
\TT_{<2}=\{0,e_1,e_2,e_3,v,(1,1,1,2)\}.
\]
The weight sum on $\TT_{<2}$ is $17/3$. The first six characteristic-series
Newton slopes for $p\equiv2\pmod3$ are
$0$, $1$, $1$, $1$, $1$ and $2$, whose sum is $6$.
Thus the height difference at $x=6$ is $1/3$.
\end{remark}

\end{document}